\newtheorem{dfn}{Definition}[section]
\newtheorem{lem}[dfn]{Lemma}
\newtheorem{alg}[dfn]{Algorithm}
\newtheorem{theorem}{Theorem}[section]
\theoremstyle{definition}
\newtheorem{asm}[dfn]{Assumption}
\newtheorem{exm}[dfn]{Example}
\newtheorem{rem}[dfn]{Remark}
\DeclareMathOperator*{\zer}{zer}
\DeclareMathOperator*{\gra}{gra}
\DeclareMathOperator*{\nt}{int}
\DeclareMathOperator{\dom}{dom}
\DeclareMathOperator{\Pro}{Prob}
\title{Stochastic forward-backward-half forward splitting algorithm with variance reduction}
\author{ \sc \normalsize Liqian Qin$^{a,b}${\thanks{ email: qlqmath@163.com}},\,\,
Yaxuan Zhang$^{a}${\thanks{email: bunnyxuan@tju.edu.cn}},\,\,
Qiao-Li Dong$^{a}${\thanks{Corresponding author. email: dongql@lsec.cc.ac.cn}}
\,\,\,and\,
Michael Th. Rassias$^{c,d}${\thanks{email: mthrassias@yahoo.com}}\\
\small $^{a}$College of Science,  Civil Aviation University of China, Tianjin 300300, China,\\
\small $^{b}$School of Mathematics and Information Science, Guangzhou University,\\
 \small  Guangzhou 510006, China,\\
\small $^{c}$Department of Mathematics and Engineering Sciences,
Hellenic Military Academy, \\
\small 16673 Vari Attikis, Greece,\\
\small $^{d}$Program in Interdisciplinary Studies, 1 Einstein Dr,  Princeton, NJ 08540, USA.
}
\date{}
\begin{document}

	\maketitle

	\begin{abstract}
\noindent
\\
In this paper, we present a stochastic forward-backward-half forward splitting algorithm with variance reduction for solving the structured monotone inclusion problem composed of a maximally monotone operator, a maximally monotone operator and a cocoercive operator in a separable
real Hilbert space. By defining a Lyapunov function, we establish the weak almost sure convergence of the proposed algorithm, and obtain the linear convergence when one of the maximally monotone operators is strongly monotone. Numerical examples are provided to show the performance of the proposed algorithm.\\

\noindent {\bf Key words}: Variance reduction; Forward-backward-half forward splitting algorithm;  Monotone inclusion problem; The weak almost sure convergence; Strongly monotone; Linear convergence.

\end{abstract}

\section{Introduction and preliminaries}
\noindent
In this paper, we consider the structured monotone inclusion problem in a separable real Hilbert space $\mathcal{H}$ which is to find $x\in \mathcal{H}$  such that
\begin{eqnarray}
\label{PROB}
 \ 0\in (A+B+C)(x),
\end{eqnarray}
where $A: \mathcal{H} \rightarrow 2^{\mathcal{H}}$ is a maximally monotone set-valued operator, $B: \mathcal{H} \rightarrow \mathcal{H}$ is a monotone point-valued operator, and $C: \mathcal{H} \rightarrow \mathcal{H}$ is a $\beta$-cocoercive operator. Problem \eqref{PROB} arises in various applications such as optimization problems \cite{FBHF,{Davis}}, deep learning \cite{Barnet}, image deblurring \cite{Tang},  variational inequalities \cite{Alacaoglu-Malitsky}, equilibrium problems and games \cite{{Quoc},{Shehu},{Thong}}.
For example, the user equilibrium traffic assignment problem can be formulated as a finite dimensional variational inequality: find $x^* \in \Omega,$ such that
\begin{equation}
\label{variantion}
\langle F(x^*),x-x^*\rangle\geq 0, \quad \forall x \in \Omega.
\end{equation}
Let $A$ be the subdifferential of the indicator function of a closed convex set $\Omega \subset \mathcal{H},$ $B+C=F,$ then problem \eqref{variantion} becomes the problem \eqref{PROB}.

Numerous iterative algorithms for solving \eqref{PROB} have been presented and analyzed, see, for instance, \cite{{Comb},{Davis},{Latafat},{Malitsky},{Rie},{Ryu},{Ryuu},{Yu},{Zong},{Tang}} and references therein.  In particular, Brice\~{n}o-Arias et al. \cite{FBHF} first proposed a forward-backward-half forward (FBHF) splitting algorithm as follows
\begin{equation}
\label{FBHF}
\left\{
\begin{array}{lr}
p^{k}=J_{\gamma^{k} A}\left(x^{k}-\gamma^{k}(B+C)x^{k}\right), & \\
x^{k+1}=P_{X}(p^{k}+\gamma^{k}(Bx^{k}-Bp^{k})), & \\
\end{array}
\right.
\end{equation}
where $\gamma^{k}$ is step-size, $\gamma^{k}\in [\eta , \chi-\eta]$, $\eta \in (0,\frac{\chi}{2})$, $\chi=\frac{4 \beta}{1+\sqrt{1+16\beta^2L_B^2}}$, $J_{\gamma^{k} A}=({\rm Id}+\gamma^{k} A)^{-1}$ is the resolvent of $A$, and $X$ is a nonempty closed convex subset of $\mathcal{H}$ containing a solution of the problem \eqref{PROB}. They obtained the weak convergence of the method \eqref{FBHF} in a real Hilbert space.

In many cases, monotone inclusion problems have a finite sum structure. For example, finite sum minimization is ubiquitous in machine learning where we minimize the  empirical risk \cite{LJC}, and nonlinear constrained optimization problems \cite{FBHF}. Finite sum saddle-point problems and
finite sum variational inequalities can also be transformed into a monotone inclusion
problems \cite{Zhang}.
Given the effectiveness of variance-reduced algorithms for finite sum function minimization, a natural idea is to use similar algorithms to solve the more general finite sum monotone inclusion problems.

Now, we detail our problem setting. Suppose that the maximally monotone operator $B$ in \eqref{PROB} has a finite sum representation $B= \sum_{i=1}^N B_i$, where each $B_i$ is $L_i$-Lipschitz. Then the problem \eqref{PROB} can be written in the following form
\begin{equation}\label{PROB2}
\mbox{Find} \ x \in \mathcal{H}  \ \ \mbox{such that} \ 0\in (A+\sum_{i=1}^N B_i+C)(x).
\end{equation}
If $B$ is $L_B$-Lipschitz, then it  might be the case that $L_i$ are easy to compute, but not $L_B$. In this case, $\sum_{i=1}^N L_i \geq L_B$ gives us a most natural upper bound on $L_B$. On the other hand, the cost of computing $Bx$ is rather expansive  when $N$ is very large.

Throughout this paper, we assume access to a stochastic oracle $B_{\xi}$ such that  $B_{\xi}$ is unbiased, $B(x)=\mathbb{E}[B_{\xi}(x)]$, and then consider utilizing the stochastic oracle $B_{\xi}$ to perform in the half forward step in the \eqref{FBHF} instead of $B$, which yields lower cost per iteration. We also assume that $B$ is $L$-Lipschitz in mean. The two simplest stochastic oracles can be defined as follows
\begin{itemize}
\item[(i)] Uniform sampling: $B_{\xi}(x)=NB_i(x)$, $P_{\xi}(i)=\Pro \{\xi=i\}=\frac{1}{N}$. In this case, $L=\sqrt{N\sum_{i=1}^N L_i^2}$.
\item[(ii)] Importance sampling: $B_{\xi}(x)=\frac{1}{P_{\xi}(i)}B_i(x)$, $ P_{\xi}(i)=\Pro\{\xi=i\}=\frac{L_i}{\sum_{j=1}^N L_j}$. In this case, $L=\sum_{i=1}^N L_i$.
\end{itemize}

Recently, Kovalev et al. \cite{KHR} proposed a loopless variant of stochastic variance reduced gradient (SVRG) \cite{SVRG}
which removes the outer loop present in SVRG and uses a probabilistic update of the full gradient instead. Later, Alacaoglu et al. \cite{Alacaoglu-Malitsky} proposed the loopless version of extragradient method with variance reduction for solving variational inequalities. They also applied the same idea over the forward-backward-forward (FBF) splitting algorithm which was introduced by Tseng \cite{Tseng} to solve the two operators monotone inclusion problem in a finite dimensional Euclidean space,
\begin{eqnarray*}
\mbox{find} \ x \in \mathbb{R}^d \ \ \mbox{such that} \ 0\in (A+B)(x),
\end{eqnarray*}
where $A: \mathbb{R}^d \rightarrow 2^{\mathbb{R}^d}$ and $B: \mathbb{R}^d \rightarrow \mathbb{R}^d$ are maximally monotone operators. The operator $B: \mathbb{R}^d \rightarrow \mathbb{R}^d$ has a stochastic oracle $B_{\xi}$ that is unbiased and Lipschitz in mean. They proved the almost sure convergence of the forward-backward-forward splitting algorithm with variance reduction when $B_{\xi}$ is continuous for all $\xi$. However,  the cocoercive operator $C$ is required to admit a finite-sum structure as well, if one extends the forward-backward-forward splitting algorithm with variance reduction to solve problem \eqref{PROB2}.


In this paper, we propose a stochastic forward-backward-half forward splitting algorithm with variance reduction (shortly, VRFBHF).  Under some mild assumptions, we establish the weak almost sure convergence of the sequence $\{x^k\}_{k \in \mathbb{N}}$ generated by our algorithm. Lyapunov analysis of the proposed algorithm is based on the monotonicity inequalities of $A$ and $B$, and the cocoercivity inequality of $C$. Furthermore, we obtain the linear convergence when $A$ or $B$ is strongly monotone. Numerical experiments are conducted to demonstrate the efficacy of the proposed algorithm.

Next, we recall some definitions and known results which will be helpful for further analysis.

Throughout this paper, $\mathcal{H}$ is a separable Hilbert space with inner product  $\langle \cdot, \cdot\rangle$, induced norm $\|\cdot\|$, {and Borel $\sigma$-algebra $\mathcal{B}$.} $\mathbb{R}^d$ is a $d$-dimensional Euclidean space. The set of nonnegative integers is denoted by $\mathbb{N}$. {The probability space is $(\Omega, \mathcal{F}, P).$ A $\mathcal{H}$-valued random variable is a measurable map $x: (\Omega, \mathcal{F})\rightarrow (\mathcal{H}, \mathcal{B}).$ The $\sigma$-algebra
generated by a family $\Phi$ of random variables is denoted by $\sigma(\Phi).$ Let $\mathscr{F}= \{\mathcal{F}_k\}_{k\in \mathbb{N}}$
be a sequence of sub-sigma algebras of $\mathcal{F}$ such that $\mathcal{F}_k \subset \mathcal{F}_{k+1}.$ } Probability
mass function $P_{\xi}(\cdot)$ is supported on $\{1, \ldots, N\}$.  We denote the strong convergence and weak convergence by $``\rightarrow"$ and $``\rightharpoonup"$, respectively.

\begin{dfn}\rm{(\cite[Definition 20.1 and Definition 20.20]{BC2011}\ )}
{\rm
 A set-valued mapping $A: \mathcal{H} \rightarrow 2^{\mathcal{H}}$ is characterized by its graph $\gra(A)=\{(x,u) \in \mathcal{H}\times\mathcal{H}:u \in Ax\}.$
A set-valued mapping $A: \mathcal{H} \rightarrow 2^{\mathcal{H}}$ is said to be\\
{\rm (i)}  monotone if $\langle u-v,x-y \rangle \geq 0$ for all $(x,u),(y,v) \in \gra(A).$\vskip 1mm
\noindent
{\rm (ii)}  maximally monotone if there exists no monotone operator $B: \mathcal{H} \rightarrow 2^{\mathcal{H}}$ such that $\gra(B)$ properly contains $\gra(A),$ i.e., for every $(x,u) \in \mathcal{H}\times \mathcal{H}$,
$$
(x,u) \in \gra (A)  \ \ \Leftrightarrow \ \ \langle u-v , x-y \rangle \geq 0,  \ \  \forall(y,v)\in \gra (A).
$$
}
\end{dfn}

\begin{dfn}
{\rm
An operator  $T: \mathcal{H} \rightarrow \mathcal{H}$ is said to be\vskip 1mm
\noindent
{\rm (i)} $L$-Lipschitz continuous, if there exists a constant $ L > 0$, such that
$$
\|Tx-Ty\|\leq  L\|x-y\|,\quad\forall x, y\in \mathcal{H};
$$\vskip 1mm
\noindent
{\rm (ii)} ${\beta}$-cocoercive, if there exists a constant $ \beta > 0$, such that
$$
\langle Tx-Ty,x-y \rangle \geq  {\beta}\|Tx-Ty\|^2,\quad\forall x, y\in \mathcal{H}.
$$\vskip 1mm

By the Cauchy--Schwarz inequality, a ${\beta}$-cocoercive operator is $\frac{1}{\beta}$-Lipschitz continuous.
}
\end{dfn}

\begin{lem}{\rm(\cite[Proposition 20.38]{BC2011} \ )}\label{lem3}
Let $A: \mathcal{H}\rightarrow 2^{\mathcal{H}}$ be maximally monotone. Then $\gra(A)$ is sequentially closed in ${\mathcal{H}}^{\rm weak}\times {\mathcal{H}}^{\rm strong}$, i.e., for every sequence $(x^{k},u^{k})_{k \in \mathbb{N}}$ in $\gra(A)$ and $(x,u)\in \mathcal{H}\times \mathcal{H}$, if $x^{k}\rightharpoonup x$ and $u^{k}\rightarrow u$, then $(x,u) \in \gra(A)$. \vskip 1mm
\end{lem}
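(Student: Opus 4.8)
The plan is to invoke the maximal monotonicity of $A$ through its defining characterization: a pair $(x,u)$ belongs to $\gra(A)$ precisely when $\langle u-v,x-y\rangle\geq 0$ for every $(y,v)\in\gra(A)$. Thus it suffices to fix an arbitrary $(y,v)\in\gra(A)$ and verify that the candidate limit pair $(x,u)$ satisfies $\langle u-v,x-y\rangle\geq 0$; the conclusion $(x,u)\in\gra(A)$ then follows immediately from maximality.

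First I would exploit that each $(x^{k},u^{k})$ lies in $\gra(A)$. By monotonicity of $A$, for every $k$ we have the scalar inequality $\langle u^{k}-v,x^{k}-y\rangle\geq 0$. The whole problem then reduces to passing to the limit in this inequality, since a limit of nonnegative numbers is nonnegative. The key step is therefore to show that $\langle u^{k}-v,x^{k}-y\rangle\to\langle u-v,x-y\rangle$. I would split the inner product as $\langle u^{k}-v,x^{k}-y\rangle=\langle u^{k}-v,x^{k}-x\rangle+\langle u^{k}-v,x-y\rangle$. The second summand converges to $\langle u-v,x-y\rangle$, because $u^{k}\to u$ strongly forces $u^{k}-v\to u-v$ strongly and the pairing with the fixed vector $x-y$ is continuous.

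The main obstacle, and the crux of the lemma, is the first summand $\langle u^{k}-v,x^{k}-x\rangle$, which pairs a strongly convergent factor against the weakly null factor $x^{k}-x\rightharpoonup 0$. Here I would use the standard fact that the pairing of a strongly convergent sequence with a weakly convergent one converges to the pairing of the limits: writing $\langle u^{k}-v,x^{k}-x\rangle=\langle u^{k}-u,x^{k}-x\rangle+\langle u-v,x^{k}-x\rangle$, the second term vanishes by $x^{k}\rightharpoonup x$, while the first is bounded above by $\|u^{k}-u\|\,\|x^{k}-x\|$, which tends to $0$ since $\|u^{k}-u\|\to 0$ and $\{x^{k}\}$ is bounded (weakly convergent sequences in $\mathcal{H}$ are bounded). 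This is exactly the point where the strong convergence of $u^{k}$ is indispensable: if $u^{k}$ converged only weakly, this cross term could not be controlled in general.

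Combining the two summands yields $\langle u^{k}-v,x^{k}-y\rangle\to\langle u-v,x-y\rangle$, and passing to the limit in $\langle u^{k}-v,x^{k}-y\rangle\geq 0$ gives $\langle u-v,x-y\rangle\geq 0$. Since $(y,v)\in\gra(A)$ was arbitrary, the characterization of maximal monotonicity furnishes $(x,u)\in\gra(A)$, which completes the argument.
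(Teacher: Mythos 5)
Your argument is correct and complete: it is exactly the standard proof of \cite[Proposition 20.38]{BC2011}, which the paper simply cites without reproducing. Passing to the limit in the monotonicity inequality $\langle u^{k}-v,x^{k}-y\rangle\geq 0$ via the strong--weak pairing, and then invoking the characterization of maximality from Definition 1.1(ii), is precisely the intended route, and your handling of the cross term $\langle u^{k}-u,x^{k}-x\rangle$ correctly identifies where the strong convergence of $u^{k}$ is essential.
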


{
\begin{lem}{\rm(\cite[Proposition 2.3]{Comettes}\ )} \label{Robbins}
Let $F$ be a nonempty closed subset of a separable real Hilbert space $\mathcal{H}$ and  $\phi:[0,+\infty) \rightarrow [0,+\infty)$ be a strictly increasing function such that $\lim_{t \rightarrow +\infty}\phi(t)=+\infty$. Let $\{x^k\}_{k \in \mathbb{N}}$ be a sequence of $\mathcal{H}$-valued random variables and  $\chi_k=\sigma(x^0,\cdots,x^k), \forall k \in \mathbb{N}.$  Suppose that, for every $z \in F$, there exist $\{\beta_k(z)\}_{k\in\mathbb{N}}$, $\{\xi_k(z)\}_{k\in\mathbb{N}}$, and $\{\zeta_k(z)\}_{k\in\mathbb{N}}$ be nonnegative $\chi_k$-measurable random variables such that $\sum_{k=0}^{\infty} \beta_k(z) < \infty$, $\sum_{k=0}^{\infty} \xi_k(z) < \infty$ and
\begin{equation*}
\mathbb{E}(\phi(\|x^{k+1}-z\|) |  \chi_k) \leq (1+\beta_k(z))\phi(\|x^k-z\|)+\xi_k(z)-\zeta_k(z),\quad\forall k\in\mathbb{N}.
\end{equation*}
Then the following hold:\vskip 1mm
\noindent
{\rm (i)}  $(\forall z \in F),$ $\sum_{k=0}^{\infty} \zeta_k(z) < \infty$ almost surely. \vskip 1mm
\noindent
{\rm (ii)} There exists $\Xi \in \mathcal{F}$ such that $P(\Xi)=1$, for every $\theta \in \Xi$ and every $z \in F$, $\{\|x^k(\theta)-z\|\}_{k\in \mathbb{N}}$ converges.  \vskip 1mm
\noindent
{\rm (iii)} Suppose that all weak cluster points of $\{x^k\}_{k \in \mathbb{N}}$ belong to $F$ almost surely, then $\{x^k\}_{k \in \mathbb{N}}$ converges weakly almost surely to an $F$-valued random variable.
\end{lem}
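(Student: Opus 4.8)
The plan is to deduce all three assertions from the classical Robbins--Siegmund almost-supermartingale convergence theorem, applied to the scalar sequence $Y_k^z := \phi(\|x^k-z\|)$ for each fixed $z \in F$, and then to upgrade the resulting ``for each fixed $z$'' statements into ``simultaneously for all $z \in F$'' statements by exploiting the separability of $\mathcal{H}$. The weak convergence in (iii) is then obtained by a standard Opial subsequence argument.

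First I would fix $z \in F$. The hypothesis reads $\mathbb{E}(Y_{k+1}^z \mid \chi_k) \le (1+\beta_k(z))\,Y_k^z + \xi_k(z) - \zeta_k(z)$ with all four sequences nonnegative, $\chi_k$-adapted, and $\sum_k \beta_k(z) < \infty$, $\sum_k \xi_k(z) < \infty$, which is exactly the setting of the Robbins--Siegmund theorem. Its conclusion is that, almost surely, $Y_k^z$ converges and $\sum_k \zeta_k(z) < \infty$; the latter is precisely assertion (i), which is a per-$z$ almost-sure statement and so matches the claim directly. Moreover, because $\phi$ is strictly increasing with $\phi(t)\to\infty$, the a.s.\ convergence (hence boundedness) of $Y_k^z=\phi(\|x^k-z\|)$ forces $\{\|x^k-z\|\}_k$ to converge a.s.: boundedness follows since $\phi\to\infty$, and if $\liminf_k\|x^k-z\|=a<b=\limsup_k\|x^k-z\|$, then for each $s\in(a,b)$ the two inequalities $\phi(\|x^{k_j}-z\|)<\phi(s)$ (along a subsequence with $\|x^{k_j}-z\|\to a$) and $\phi(\|x^{m_j}-z\|)>\phi(s)$ (along a subsequence with $\|x^{m_j}-z\|\to b$) would force the common limit of $Y_k^z$ to equal $\phi(s)$ for \emph{every} $s\in(a,b)$, contradicting strict monotonicity. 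Denote by $\Xi_z$ the full-measure event on which $\{\|x^k-z\|\}_k$ converges.

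For (ii) the obstacle, and the main one in the whole proof, is that the null set excluded in $\Xi_z$ depends on $z$, while $F$ is in general uncountable, so $\bigcap_{z\in F}\Xi_z$ need not have full measure. Here separability is decisive: let $D\subset F$ be a countable dense subset (it exists since $\mathcal{H}$, hence $F$, is separable) and set $\Xi := \bigcap_{z\in D}\Xi_z$, a countable intersection of full-measure events, so $P(\Xi)=1$. On $\Xi$ the sequence $\{\|x^k(\theta)-z\|\}_k$ converges for every $z\in D$. To pass from $D$ to all of $F$ I would use the nonexpansive dependence of the distance on the base point: for arbitrary $z\in F$ and $z'\in D$, the reverse triangle inequality gives $\bigl|\,\|x^k-z\|-\|x^k-z'\|\,\bigr|\le\|z-z'\|$, whence the oscillation $\limsup_k\|x^k-z\|-\liminf_k\|x^k-z\|$ is at most $2\|z-z'\|$; letting $z'\in D$ tend to $z$ shows this oscillation vanishes, so $\{\|x^k(\theta)-z\|\}_k$ converges for every $z\in F$ and every $\theta\in\Xi$.

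Finally, for (iii) I would work on $\Xi$ intersected with the full-measure event on which every weak cluster point of $\{x^k\}$ lies in $F$. There $\{x^k\}$ is bounded (pick any $z\in F$ and use that $\|x^k-z\|$ converges), hence admits weak cluster points, all of which lie in $F$. If $u_1,u_2\in F$ are weak cluster points along $x^{k_j}\rightharpoonup u_1$ and $x^{m_j}\rightharpoonup u_2$, then since $\|x^k-u_1\|$ and $\|x^k-u_2\|$ both converge, the identity $\|x^k-u_1\|^2-\|x^k-u_2\|^2 = 2\langle x^k, u_2-u_1\rangle + \|u_1\|^2-\|u_2\|^2$ shows that $\langle x^k, u_2-u_1\rangle$ converges; evaluating its limit along the two subsequences yields $\langle u_1, u_2-u_1\rangle = \langle u_2, u_2-u_1\rangle$, i.e.\ $\|u_1-u_2\|^2=0$, so $u_1=u_2$. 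Thus the weak cluster point is unique on $\Xi$, giving weak convergence of $\{x^k\}$ to an $F$-valued limit, whose measurability follows from its construction as an almost-sure weak limit of the measurable maps $x^k$. Apart from the density-to-$F$ extension and the bookkeeping of full-measure events in (ii), every step is a routine combination of Robbins--Siegmund and Opial's lemma.
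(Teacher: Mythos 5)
The paper does not prove this lemma at all: it is imported verbatim as \cite[Proposition~2.3]{Comettes}, so there is no in-paper argument to compare against. Your proof is correct and follows the standard route of that cited source --- Robbins--Siegmund applied to $\phi(\|x^k-z\|)$ for each fixed $z$, the countable-dense-subset argument (the genuine crux, correctly identified) to get a single full-measure event for all $z\in F$ via the reverse triangle inequality, and an Opial-type uniqueness argument for the weak limit; your observation that only monotonicity of $\phi$, not continuity, is needed to pass from convergence of $\phi(\|x^k-z\|)$ to convergence of $\|x^k-z\|$ is also handled correctly. The only point stated rather than proved is the measurability of the weak limit, which is standard in a separable $\mathcal{H}$ (test against a countable dense set and invoke Pettis measurability), so I would not count it as a gap.
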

}
The paper is organized as follows. In Section 2, we introduce the stochastic forward-backward-half forward splitting algorithm with variance reduction to solve the problem \eqref{PROB2}, and show the weak almost sure and linear convergence of the proposed algorithm. Finally, we present the numerical experiments in Section 3.
\section{Main Results}\label{main}
In the sequel, we assume that the following conditions are satisfied:
\begin{asm} \label{CON1}
	\begin{itemize}
		\item[(i)] The operator $A: \mathcal{H}\to 2^{\mathcal{H}}$ is maximal monotone;
        \item[(ii)] The operator $B$ has a stochastic oracle $B_\xi$ that is unbiased, $B(x)=\mathbb{E}[B_{\xi}(x)]$, and $L$-Lipschitz in mean:
        \begin{equation}
        \label{lip}
        \mathbb{E}[\|B_{\xi}(u)-B_{\xi}(v)\|^2] \leq L^2\|u-v\|^2, \quad \forall u,v \in \mathcal{H};
        \end{equation}
		\item[(iii)] $C: \mathcal{H}\to \mathcal{H}$ is $\beta$-cocoercive;
		\item[(iv)] The solution set of the problem \eqref{PROB2}, denoted by {$Z$}, is nonempty.
	\end{itemize}
\end{asm}

\noindent
We now present the stochastic forward-backward-half forward splitting algorithm with variance reduction to solve the  problem \eqref{PROB2}.

\begin{alg}\label{ALG}
{\rm
\hrule\hrule
\vskip 1mm
\noindent\textbf{VRFBHF}
\vskip 1mm
\hrule\hrule

\vskip 1mm

\noindent
1. {\bf Input:}
Probability $p \in (0, 1]$, probability distribution $Q$, step-size $\gamma$, $\lambda \in (0, 1).$

Let $x^0 = w^0$.

\noindent
2. {\bf for} $k = 0, 1, \ldots$ {\bf do}

\noindent
3.\quad  $\bar x^k=\lambda x^k+(1-\lambda)w^k$

\noindent
4.\quad
$
y^k = J_{\gamma A}\left(\bar x^k - \gamma(B+C)w^k\right)
$

\noindent
5.\quad
Draw an index $\xi_k$ according to $Q$

\noindent
6.\quad
$
x^{k+1}=y^k+\gamma(B_{\xi_k}w^k-B_{\xi_k}y^k)
$

\noindent
7.\quad
$
w^{k+1}=
\begin{cases}
x^{k+1},&\hbox{with probability} \,\,p\\
w^k,&\hbox{with probability} \,\,1-p\
\end{cases}
$

\noindent
8:\quad \textbf{end \ for}
\vskip 1mm
}
\hrule\hrule
\hspace*{\fill}
\end{alg}

\begin{rem}
\rm

Algorithm \ref{ALG} is a very general algorithm and it is brand new to the literature. We review how Algorithm \ref{ALG} relates to previous work. Algorithm \ref{ALG} becomes the forward-backward-forward algorithm with variance reduction in \cite{Alacaoglu-Malitsky} if $C=0$. Algorithm \ref{ALG} reduces to loopless SVRG in \cite{KHR} if $\lambda=1$,
$B=\nabla f$, $A=0$ and $C=0$, where $f(x)=\sum_{i=1}^Nf_i(x)$ and $f_i(x)$ is the loss of model $x$ on data point $i$.

\end{rem}
\begin{rem}
We have two sources of randomness at each iteraton: the index $\xi_k$ which is used for updating $x^{k+1}$, and the reference point $w^k$ which is updated in each iteration with probability $p$ by the iterate $x^{k+1}$, or left unchanged with probability $1-p$. Intuitively, we wish to keep $p$ small to lower the cost per iteration. And different from the FBHF splitting algorithm \eqref{FBHF},  we use the parameter $\lambda $ to introduce inertia in  Algorithm \ref{ALG} by including the information of past iterations. This can improve the efficiency of the algorithms. See \cite{Nesterov1,{Nesterov2}} for details.
\end{rem}
\subsection{The weak almost sure convergence}

In this subsection, we establish the weak almost sure convergence of Algorithm \ref{ALG}. We use the following notations for conditional expectations: $\mathbb{E}_k[\cdot]=\mathbb{E}[\cdot|\sigma(\xi_0,...,\xi_{k-1},w^{k})]$ and $\mathbb{E}_{k+\frac{1}{2}}[\cdot]=\mathbb{E}[\cdot|\sigma(\xi_0,...,\xi_{k},w^{k})]$.

For the iterates $\{x^k\}_{k \in \mathbb{N}}$ and $\{w^k\}_{k \in \mathbb{N}}$ generated by Algorithm \ref{ALG}, we define the Lyapunov function
\begin{equation*}
\Phi_{k}(x):=\lambda \|x^k-x\|^2+\frac{1-\lambda}{p}\|w^k-x\|^2,\ \forall x \in \mathcal{H},
\end{equation*}
 which helps to establish the weak almost sure convergence of the proposed algorithm.
\begin{theorem}
\label{the1}
{
\noindent
Let Assumption \ref{CON1} hold, $\lambda \in [0,1)$, $p \in (0,1]$, and $\gamma \in (0,\frac{4\beta(1-\lambda)}{1+\sqrt{1+16\beta^2L^2(1-\lambda)}})$. Then for $\{x^k\}_{k \in \mathbb{N}}$ generated by Algorithm \ref{ALG} and any $x^* \in Z$, it holds that
\begin{equation}
\label{lypunov}
\mathbb{E}_k[\Phi_{k+1}(x^*)] \leq \Phi_{k}(x^*).
\end{equation}
Then the sequence $\{x^k\}_{k \in \mathbb{N}}$  generated by Algorithm \ref{ALG} {converges weakly almost surely to a $Z$-valued random variable}.
}
\end{theorem}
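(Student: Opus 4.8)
The plan is to first derive the one-step supermartingale estimate \eqref{lypunov}, and then feed it into the Robbins--Siegmund-type Lemma \ref{Robbins}, using the graph-closedness Lemma \ref{lem3} to identify the weak cluster points as solutions of \eqref{PROB2}. For the one-step estimate I would start from the resolvent step, which is equivalent to the inclusion $\gamma^{-1}(\bar x^k - y^k) - (B+C)w^k \in A y^k$; fixing $x^*\in Z$ we also have $-(B+C)x^*\in Ax^*$. Expanding $\|x^{k+1}-x^*\|^2$ from line~6 of Algorithm~\ref{ALG} and taking $\mathbb{E}_k$, I would use, in order: unbiasedness to replace $\mathbb{E}_k[B_{\xi_k}w^k-B_{\xi_k}y^k]$ by $Bw^k-By^k$; the Lipschitz-in-mean bound \eqref{lip} to control the second-moment term by $\gamma^2L^2\|w^k-y^k\|^2$; monotonicity of $A$ (via the displayed inclusion) with a three-point identity to trade $\|y^k-x^*\|^2$ for $\|\bar x^k-x^*\|^2-\|\bar x^k-y^k\|^2$; monotonicity of $B$ to discard $\langle By^k-Bx^*,y^k-x^*\rangle\ge0$; and $\beta$-cocoercivity of $C$ with a tuned Young inequality (chosen to cancel the $\beta\|Cw^k-Cx^*\|^2$ terms) to bound the residual $C$-term by $\tfrac{\gamma}{2\beta}\|w^k-y^k\|^2$. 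This produces
\[
\mathbb{E}_k[\|x^{k+1}-x^*\|^2]\le \|\bar x^k - x^*\|^2 - \|\bar x^k - y^k\|^2 + \Big(\tfrac{\gamma}{2\beta}+\gamma^2L^2\Big)\|w^k-y^k\|^2 .
\]

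Next I would assemble $\Phi_{k+1}$. Conditioning on the independent coin flip gives $\mathbb{E}_k[\|w^{k+1}-x^*\|^2]=p\,\mathbb{E}_k[\|x^{k+1}-x^*\|^2]+(1-p)\|w^k-x^*\|^2$, whence the weights collapse to $\mathbb{E}_k[\Phi_{k+1}(x^*)]=\mathbb{E}_k[\|x^{k+1}-x^*\|^2]+\tfrac{(1-\lambda)(1-p)}{p}\|w^k-x^*\|^2$. Substituting the estimate above and using the identity $\|\bar x^k-x^*\|^2=\lambda\|x^k-x^*\|^2+(1-\lambda)\|w^k-x^*\|^2-\lambda(1-\lambda)\|x^k-w^k\|^2$ reconstitutes $\Phi_k(x^*)$ and leaves a remainder that, on writing $\bar x^k-y^k=\lambda(x^k-w^k)+(w^k-y^k)$, becomes the quadratic form
\[
q_k=-\lambda\|x^k-w^k\|^2-2\lambda\langle x^k-w^k,\,w^k-y^k\rangle+\Big(\tfrac{\gamma}{2\beta}+\gamma^2L^2-1\Big)\|w^k-y^k\|^2 .
\]
The heart of the argument is that $q_k\le 0$: this holds exactly when the associated symmetric $2\times2$ matrix is negative semidefinite, i.e. when $\tfrac{\gamma}{2\beta}+\gamma^2L^2\le 1-\lambda$, and one verifies that the stated interval for $\gamma$ is precisely $\{\gamma>0:\,2\beta L^2\gamma^2+\gamma<2\beta(1-\lambda)\}$, the positive root of $2\beta L^2\gamma^2+\gamma=2\beta(1-\lambda)$ being the given upper bound. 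For $\gamma$ strictly inside the interval the matrix is negative definite, so $\zeta_k:=-q_k\ge\mu\|w^k-y^k\|^2$ for some $\mu>0$; this establishes \eqref{lypunov}.

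For the almost sure statements I would invoke Lemma~\ref{Robbins} with $\phi(t)=t^2$, $F=Z$, and state space $\mathcal H\times\mathcal H$ endowed with the weighted norm making $\Phi_k(x^*)$ equal to $\phi$ of the distance from $(x^k,w^k)$ to the diagonal point $(x^*,x^*)$ (when $\lambda=0$ the form degenerates and one applies the lemma to the single sequence $\{w^k\}$). Using the natural filtration generated by $\xi_0,\dots,\xi_{k-1}$ and the coin flips, under which $\Phi_k$ and $\zeta_k$ are measurable, the conditional inequality passes through by the tower property with $\beta_k=\xi_k=0$. Parts (i)--(ii) then give $\sum_k\zeta_k<\infty$ and convergence of $\{\Phi_k(x^*)\}$ almost surely; in particular $\|w^k-y^k\|\to0$ a.s. Since $\mathbb{E}_k\|x^{k+1}-y^k\|^2=\gamma^2\mathbb{E}_k\|B_{\xi_k}w^k-B_{\xi_k}y^k\|^2\le\gamma^2L^2\|w^k-y^k\|^2$ is summable, $\|x^{k+1}-y^k\|\to0$ a.s.; together with $\|x^k-w^k\|\to0$ (for $\lambda>0$) this makes $\{x^k\}$, $\{w^k\}$, $\{y^k\}$ share the same weak cluster points.

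Finally I would identify those cluster points. Rewriting the resolvent inclusion as $r^k:=\gamma^{-1}(\bar x^k-y^k)+(By^k-Bw^k)+(Cy^k-Cw^k)\in(A+B+C)y^k$, and noting that $B$ is $L$-Lipschitz (Jensen applied to \eqref{lip}) while $C$ is $\tfrac1\beta$-Lipschitz, each summand tends to $0$ with $\|w^k-y^k\|$ and $\|x^k-w^k\|$, so $\|r^k\|\to0$ a.s. As $B+C$ is monotone and everywhere continuous, $A+B+C$ is maximally monotone, and Lemma~\ref{lem3} (closedness of the graph in $\mathcal H^{\rm weak}\times\mathcal H^{\rm strong}$) forces every weak cluster point of $\{y^k\}$, hence of $\{x^k\}$, into $Z$ almost surely; part~(iii) of Lemma~\ref{Robbins} then delivers weak almost sure convergence of $\{x^k\}$ to a $Z$-valued random variable. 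The main obstacle I anticipate is twofold: pinning down the step-size threshold through the negative semidefiniteness of $q_k$, and the measure-theoretic bookkeeping of Steps~3--4 — ensuring that summability, $\|x^{k+1}-y^k\|\to0$, and ``all weak cluster points lie in $Z$'' hold simultaneously on a single probability-one event so that Lemma~\ref{Robbins}(iii) genuinely applies.
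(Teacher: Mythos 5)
Your proposal is correct and takes essentially the same route as the paper: the same combination of the resolvent inclusion, monotonicity of $A$ and $B$, the tuned Young/cocoercivity bound $\tfrac{\gamma}{2\beta}\|w^k-y^k\|^2$, the variance bound $\gamma^2L^2\|w^k-y^k\|^2$, and the coin-flip identity for $w^{k+1}$, followed by Lemma \ref{Robbins} and graph-closedness of the maximally monotone sum $A+B+C$ via Lemma \ref{lem3}. Your intermediate estimate is an algebraically equivalent regrouping of the paper's inequality \eqref{8+} through $\bar x^k$, and the negative-semidefiniteness condition for your $q_k$ reduces exactly to the paper's coefficient condition $1-\lambda-\gamma^2L^2-\tfrac{\gamma}{2\beta}>0$, hence the same step-size threshold; your explicit product-space setup for applying Lemma \ref{Robbins} to $\Phi_k$ is, if anything, more careful than the paper's.
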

\begin{proof}
Since $x^* \in \zer(A+B+C),$ we have
\begin{equation}
-\gamma(B+C)x^* \in \gamma Ax^*. \label{1}
\end{equation}
Step 4 in Algorithm \ref{ALG} is equivalent to the inclusion
\begin{equation}
\bar{x}^k-y^k-\gamma(B+C)w^k \in \gamma Ay^k. \label{2}
\end{equation}
Combining \eqref{1}, \eqref{2} and the monotonicity of $A$, we have
\begin{equation*}
\langle y^k-\bar{x}^k+\gamma(B+C)w^k , x^*-y^k \rangle-\gamma \langle(B+C)x^*, x^*-y^k \rangle \geq 0.
\label{3}
\end{equation*}
Then from step 6 in Algorithm \ref{ALG}, we obtain
\begin{equation}
\langle x^{k+1}-\bar{x}^k+\gamma(Bw^k-B_{\xi_k}w^k+B_{\xi_k}y^k)+\gamma Cw^k , x^*-y^k \rangle-\gamma \langle(B+C)x^*, x^*-y^k \rangle \geq 0.
\label{4}
\end{equation}
By the definition of $\bar{x}^k$ and identities $2\langle a , b \rangle=\|a+b\|^2-\|a\|^2-\|b\|^2=\|a\|^2+\|b\|^2-\|a-b\|^2$, we have
\begin{equation}
\aligned
&2\langle x^{k+1}-\bar{x}^k, x^*-y^k \rangle\\
=& 2\langle x^{k+1}-y^k, x^*-y^k \rangle + 2\langle y^k-\bar{x}^k, x^*-y^k \rangle \\
=& \|x^{k+1}-y^k\|^2+\|x^*-y^k\|^2-\|x^{k+1}-x^*\|^2
 + 2\lambda\langle y^k-x^k , x^*-y^k \rangle \\
 &+ 2(1-\lambda)\langle y^k-w^k , x^*-y^k \rangle \\
=& \|x^{k+1}-y^k\|^2+\|x^*-y^k\|^2-\|x^{k+1}-x^*\|^2\\
&+ \lambda(\|x^k-x^*\|^2-\|y^k-x^k\|^2-\|y^k-x^*\|^2)\\
& +(1-\lambda)(\|w^k-x^*\|^2-\|y^k-w^k\|^2-\|y^k-x^*\|^2)\\
=&\|x^{k+1}-y^k\|^2-\|x^{k+1}-x^*\|^2+ \lambda\|x^k-x^*\|^2-\lambda\|y^k-x^k\|^2 \\
& +(1-\lambda)\|w^k-x^*\|^2-(1-\lambda)\|y^k-w^k\|^2.
\endaligned
\label{5}
\end{equation}
By the $\beta$-cocoercivity of $C$ and Young's inequality $\langle a,b \rangle \leq \beta\|a\|^2 + \frac{1}{4\beta} \|b\|^2$ for all $a , b \in \mathcal{H},$  we get
\begin{equation}
\label{7}
\aligned
&2\gamma \langle Cw^k-Cx^* , x^*-y^k \rangle\\
=&2\gamma \langle Cw^k-Cx^* , x^*-w^k \rangle+2\gamma \langle Cw^k-Cx^* , w^k-y^k \rangle \\
\leq& -2\gamma\beta\|Cw^k-Cx^*\|^2+2\gamma\beta\|Cw^k-Cx^*\|^2+\frac{\gamma}{2\beta}\|w^k-y^k\|^2 \\
=&\frac{\gamma}{2\beta}\|w^k-y^k\|^2.
\endaligned
\end{equation}
We use \eqref{5} and \eqref{7} in \eqref{4} to obtain
\begin{equation}
\label{8-}
\aligned & \ \ \ \ 2\gamma \langle Bx^*-(Bw^k-B_{\xi_k}w^k+B_{\xi_k}y^k) , x^*-y^k \rangle + \|x^{k+1}-x^*\|^2 \\
& \leq \lambda\|x^k-x^*\|^2+(1-\lambda)\|w^k-x^*\|^2+\|x^{k+1}-y^k\|^2 \\
&\ \ \ \ -\lambda\|y^k-x^k\|^2-(1-\lambda-\frac{\gamma}{2\beta})\|y^k-w^k\|^2.
\endaligned
\end{equation}
Taking expectation $\mathbb{E}_k$ on \eqref{8-} and using
\begin{equation*}
\mathbb{E}_k [ \langle Bw^k-B_{\xi_k}w^k+B_{\xi_k}y^k, x^*-y^k \rangle]=\langle By^k, x^*-y^k \rangle ,
\end{equation*}
we obtain
\begin{equation*}
\label{8}
\aligned & \ \ \ \ 2\gamma \langle Bx^*-By^k , x^*-y^k \rangle + \mathbb{E}_k \|x^{k+1}-x^*\|^2 \\
& \leq \lambda\|x^k-x^*\|^2+(1-\lambda)\|w^k-x^*\|^2+\mathbb{E}_k \|x^{k+1}-y^k\|^2 \\
&\ \ \ \ -\lambda\|y^k-x^k\|^2-(1-\lambda-\frac{\gamma}{2\beta})\|y^k-w^k\|^2.
\endaligned
\end{equation*}
By the monotonicity of $B,$ we have
\begin{equation}
\label{9}
\langle Bx^*-By^k , x^*-y^k \rangle \geq 0.
\end{equation}
Combining the definition of $x^{k+1}$ and \eqref{lip}, we have
\begin{equation*}
\mathbb{E}_k \|x^{k+1}-y^k\|^2 \leq \gamma^2L^2\|y^k-w^k\|^2.
\end{equation*}
Therefore,
\begin{equation}
\label{8+}
\aligned \mathbb{E}_k \|x^{k+1}-x^*\|^2
& \leq \lambda\|x^k-x^*\|^2+(1-\lambda)\|w^k-x^*\|^2-\lambda\|y^k-x^k\|^2 \\
&\ \ \ \ -(1-\lambda-\gamma^2L^2-\frac{\gamma}{2\beta})\|y^k-w^k\|^2.
\endaligned
\end{equation}
On the other hand, the definition of $w^{k+1}$ and $\mathbb{E}_{k+\frac{1}{2}}$ yield that
\begin{equation}
\label{e2+1}
\frac{1-\lambda}{p}\mathbb{E}_{k+\frac{1}{2}}[\|w^{k+1}-x^*\|^2]=(1-\lambda)\|x^{k+1}-x^*\|^2+(1-\lambda)\frac{1-p}{p}\|w^k-x^*\|^2.
\end{equation}
Then apply to \eqref{e2+1} the tower property $\mathbb{E}_k[\mathbb{E}_{k+\frac{1}{2}}[\cdot]]=\mathbb{E}_k[\cdot]$, we have
\begin{equation}
\label{e2+2}
\frac{1-\lambda}{p}\mathbb{E}_{k}[\|w^{k+1}-x^*\|^2]=(1-\lambda)\mathbb{E}_{k}\|x^{k+1}-x^*\|^2+(1-\lambda)\frac{1-p}{p}\|w^k-x^*\|^2.
\end{equation}
We add \eqref{e2+2} to \eqref{8+} to obtain
\begin{equation}
\label{10}
\aligned
\mathbb{E}_k [\Phi_{k+1}(x^*)]
\leq \Phi_{k}(x^*)-\lambda\|y^k-x^k\|^2  -(1-\lambda-\gamma^2L^2-\frac{\gamma}{2\beta})\|y^k-w^k\|^2.
\endaligned
\end{equation}
Thus, the inequality \eqref{lypunov} holds with $\gamma \in (0,\frac{4\beta(1-\lambda)}{1+\sqrt{1+16\beta^2L^2(1-\lambda)}})$ and $0<\lambda<1$.

Next, we show the weak almost sure convergence of the sequence $\{x^{k}\}_{k \in \mathbb{N}}$. {By Lemma \ref{Robbins} (i), there exists $\Xi\in \mathcal{F}$ such that $\mathbb{P}(\Xi)=1$ and $ \forall \theta \in \Xi$, $y^k(\theta)-x^k(\theta) \rightarrow 0$, $y^k(\theta)-w^k(\theta) \rightarrow 0$ as $k\rightarrow \infty$,  which implies $y^k(\theta)-\bar{x}^k(\theta) \rightarrow 0$ as $k\rightarrow \infty$.
From  Lemma \ref{Robbins} (ii)  there exists $\Xi^{'} \in \mathcal{F}$  such that $\mathbb{P}(\Xi^{'})=1$ and
$\{\lambda \|x^k(\theta)-x^*\|^2+\frac{1-\lambda}{p}\|w^k(\theta)-x^*\|^2\}_{k\in \mathbb{N}} \ \hbox{converges}$ for $ \forall  \theta \in \Xi^{'},$ $\forall x^* \in Z$,  which yields that the sequence $\{x^{k}(\theta)\}_{k \in \mathbb{N}}$ is bounded. } Pick $\theta \in \Xi \bigcap \Xi^{'}$ and let $\{x^{k_j}(\theta)\}_{j \in \mathbb{N}}$ be a weak convergent subsequence of the sequence $\{x^{k}(\theta)\}_{k \in \mathbb{N}}$, say without loss of generality that $x^{k_j}(\theta) \rightharpoonup \bar{x}(\theta)$ as $j\rightarrow \infty$.  From  $y^{k_j}(\theta)-x^{k_j}(\theta) \rightarrow 0$ as $j\rightarrow \infty$, it follows that $y^{k_j}(\theta) \rightharpoonup \bar{x}(\theta)$ as $j\rightarrow \infty$.
Then according to \eqref{2}, we can get
\begin{equation*}
\bar{x}^{k_j}(\theta)-y^{k_j}(\theta)-\gamma((B+C)w^{k_j}(\theta)-(B+C)y^{k_j}(\theta))\in \gamma(A+B+C)y^{k_j}(\theta).
\end{equation*}
Using the Lipschitz property of $B+C$, we get
\begin{equation*}
\bar{x}^{k_j}(\theta)-y^{k_j}(\theta)-\gamma((B+C)w^{k_j}(\theta)-(B+C)y^{k_j}(\theta)) \rightarrow 0,\ \hbox{as}\ j\rightarrow \infty.
\end{equation*}
Furthermore, based on the assumption that the operator $B$ has a full domain, we have that $A+B$ is maximally monotone by Corollary 25.5 (i) in \cite{BC2011}. Combining Lemma 2.1 in \cite{Showalter} and the assumption that $C$ is cocoercive, one has that $A+B+C$ is maximally monotone. By Lemma \ref{lem3}, $(\bar{x}(\theta) , 0) \in \gra(A+B+C)$, i.e., {$\bar{x}(\theta) \in Z$}. Hence, all weak cluster points of $\{x^{k}(\theta)\}_{k \in \mathbb{N}}$ and $\{w^{k}(\theta)\}_{k \in \mathbb{N}}$ belong to {$Z$}. {By Lemma \ref{Robbins} (iii), the sequence $\{x^k\}_{k \in \mathbb{N}}$ converges weakly almost surely to a $Z$-valued random variable.}
\end{proof}

\subsection{Linear convergence}
In this subsection, we show the linear convergence of Algorithm \ref{ALG} for solving the structured monotone inclusion problem \eqref{PROB2} when $B$ is $\mu$-strongly monotone. Indeed, assuming that the operator $A$ is strongly monotone also leads to a linear convergence result, and the proof procedure is similar.
\begin{theorem}
\label{theorem2}
{
\noindent
Let Assumption \ref{CON1} hold, $B$ be $\mu$-strongly monotone and {$x^*$ be the solution of the problem \eqref{PROB2}}. If we set $\lambda=1-p$, and $\gamma=\min\{\frac{\sqrt{p}}{2L},\beta p\}$ in Algorithm \ref{ALG}, then for the sequence $\{x^k\}_{k \in \mathbb{N}}$ generated by Algorithm \ref{ALG}, it holds that
\begin{equation}
\label{xx}
\mathbb{E}\|x^k-x^*\|^2 \leq (\frac{1}{1+c/4})^k \frac{2}{1-p} \|x^0-x^*\|^2,
\end{equation}
with $c=\min \{ \gamma \mu, \frac{p}{(1+\sqrt p)(4+p)}\}$.
}
\end{theorem}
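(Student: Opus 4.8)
The plan is to re-run the Lyapunov computation of Theorem \ref{the1} essentially verbatim, the only change being that the mere monotonicity of $B$ used in \eqref{9} is upgraded to $\mu$-strong monotonicity. Concretely, since $x^*\in\zer(A+B+C)$ and $B$ is $\mu$-strongly monotone, \eqref{9} is replaced by $\langle Bx^*-By^k,x^*-y^k\rangle\ge\mu\|y^k-x^*\|^2$. Carrying this extra term through the same manipulations that produced \eqref{10} (taking $\mathbb{E}_k$ in \eqref{8-}, using $\mathbb{E}_k\|x^{k+1}-y^k\|^2\le\gamma^2L^2\|y^k-w^k\|^2$, and adding \eqref{e2+2}) yields the refined recursion
\begin{equation*}
\mathbb{E}_k[\Phi_{k+1}(x^*)]\le\Phi_k(x^*)-\lambda\|y^k-x^k\|^2-\Big(1-\lambda-\gamma^2L^2-\tfrac{\gamma}{2\beta}\Big)\|y^k-w^k\|^2-2\gamma\mu\|y^k-x^*\|^2 .
\end{equation*}
This single inequality is what the whole theorem is extracted from.

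Next I would substitute the prescribed parameters. With $\lambda=1-p$ one has $1-\lambda=p$, while $\gamma=\min\{\tfrac{\sqrt p}{2L},\beta p\}$ gives $\gamma^2L^2\le p/4$ and $\tfrac{\gamma}{2\beta}\le p/2$, so the coefficient of $\|y^k-w^k\|^2$ obeys $1-\lambda-\gamma^2L^2-\tfrac{\gamma}{2\beta}\ge p-\tfrac p4-\tfrac p2=\tfrac p4>0$. Hence
\begin{equation*}
\mathbb{E}_k[\Phi_{k+1}(x^*)]\le\Phi_k(x^*)-(1-p)\|y^k-x^k\|^2-\tfrac p4\|y^k-w^k\|^2-2\gamma\mu\|y^k-x^*\|^2 .
\end{equation*}

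The heart of the proof, and the step I expect to be the main obstacle, is to convert these three nonpositive ``residual'' terms into a genuine contraction factor, i.e. to show $(1+c/4)\,\mathbb{E}_k[\Phi_{k+1}(x^*)]\le\Phi_k(x^*)$, which rearranges to $\mathbb{E}_k[\Phi_{k+1}(x^*)]\le\frac{1}{1+c/4}\Phi_k(x^*)$. For this it suffices that $\frac c4\mathbb{E}_k[\Phi_{k+1}(x^*)]$ be dominated by the residuals. Using $\lambda=1-p$ one has $\mathbb{E}_k[\Phi_{k+1}(x^*)]=\mathbb{E}_k\|x^{k+1}-x^*\|^2+(1-p)\|w^k-x^*\|^2$; I would bound $\mathbb{E}_k\|x^{k+1}-x^*\|^2\le 2\|y^k-x^*\|^2+2\gamma^2L^2\|y^k-w^k\|^2\le 2\|y^k-x^*\|^2+\tfrac p2\|y^k-w^k\|^2$ via the update in Step~6, \eqref{lip} and $\|a+b\|^2\le2\|a\|^2+2\|b\|^2$, and bound $\|w^k-x^*\|^2\le 2\|y^k-x^*\|^2+2\|y^k-w^k\|^2$ similarly. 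This reduces $\frac c4\mathbb{E}_k[\Phi_{k+1}(x^*)]$ to a combination of $\|y^k-x^*\|^2$ and $\|y^k-w^k\|^2$, and the two ensuing coefficient comparisons, against $2\gamma\mu$ and against $\tfrac p4$, are exactly what force the two branches of $c=\min\{\gamma\mu,\frac{p}{(1+\sqrt p)(4+p)}\}$. Getting the constants to line up, rather than merely obtaining \emph{some} rate $\rho<1$, is the delicate part: a careful choice of the Young parameters in the two bounds above is what produces the precise $(1+\sqrt p)(4+p)$ denominator.

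Finally I would take total expectations in $\mathbb{E}_k[\Phi_{k+1}(x^*)]\le\frac{1}{1+c/4}\Phi_k(x^*)$ and use the tower property to iterate, obtaining $\mathbb{E}[\Phi_k(x^*)]\le(\frac{1}{1+c/4})^k\Phi_0(x^*)$. Since $x^0=w^0$ and $\lambda=1-p$ gives $\frac{1-\lambda}{p}=1$, one has $\Phi_0(x^*)=(2-p)\|x^0-x^*\|^2\le 2\|x^0-x^*\|^2$, while $\Phi_k(x^*)\ge\lambda\|x^k-x^*\|^2=(1-p)\|x^k-x^*\|^2$. Combining these two observations gives $\mathbb{E}\|x^k-x^*\|^2\le\frac{1}{1-p}\mathbb{E}[\Phi_k(x^*)]\le(\frac{1}{1+c/4})^k\frac{2}{1-p}\|x^0-x^*\|^2$, which is precisely \eqref{xx}.
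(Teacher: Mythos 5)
Your proposal is correct and shares the paper's skeleton: upgrade \eqref{9} to $\langle Bx^*-By^k,x^*-y^k\rangle\ge\mu\|y^k-x^*\|^2$, carry the extra term through the Lyapunov recursion of Theorem \ref{the1}, set $\lambda=1-p$, and iterate the resulting contraction, finishing with $\Phi_0(x^*)=(2-p)\|x^0-x^*\|^2$ and $\Phi_k(x^*)\ge(1-p)\|x^k-x^*\|^2$. Where you diverge is in how the slack terms are converted into the factor $(1+c/4)^{-1}$. The paper pushes $\|y^k-x^*\|^2$ \emph{forward}: inequality \eqref{12} trades $2\gamma\mu\|y^k-x^*\|^2$ for $\gamma\mu\,\mathbb{E}_k\|x^{k+1}-x^*\|^2$ at the cost of $2\gamma^3L^2\mu\|y^k-w^k\|^2$ (whence the coefficient $\frac{p(1-\sqrt p)}{4}$ rather than your $\frac p4$, using $\mu\le L$), then \eqref{13} trades part of that for $\frac c4\mathbb{E}_k\|w^{k+1}-x^*\|^2$, and the algebraic step $1-p+\frac c2\ge(1-p)(1+\frac c4)$ closes the loop. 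You instead expand $\mathbb{E}_k[\Phi_{k+1}(x^*)]=\mathbb{E}_k\|x^{k+1}-x^*\|^2+(1-p)\|w^k-x^*\|^2$ and bound both summands by $\|y^k-x^*\|^2$ and $\|y^k-w^k\|^2$; this avoids the $2\gamma^3L^2\mu$ loss and yields the weaker sufficient conditions $c\le\frac{4\gamma\mu}{2-p}$ and $c\le\frac{2p}{4-3p}$, both of which are implied by the stated $c=\min\{\gamma\mu,\frac{p}{(1+\sqrt p)(4+p)}\}$, so the theorem follows (indeed with a slightly larger admissible $c$). One small caveat: your comparisons do not ``exactly force'' the denominator $(1+\sqrt p)(4+p)$ as you suggest --- that particular constant is an artifact of the paper's forward route (the $2\gamma^3L^2\mu$ term and the coefficient $\frac{c(1-p)(4+p)}{4}$ in \eqref{13}) --- but since the theorem only asserts the bound for the given $c$, your looser thresholds suffice and no further tuning of Young parameters is actually needed.
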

\begin{proof}
If $B$ is $\mu$-strongly monotone, then \eqref{9} becomes
\begin{equation*}
\langle Bx^*-By^k , x^*-y^k \rangle \geq \mu\|x^*-y^k\|^2.
\end{equation*}
We continue as in the proof of Theorem \ref{the1} to obtain, instead of \eqref{10},
\begin{equation}
\label{11}
\aligned
& \ \ \ \ 2\gamma\mu\|y^k-x^*\|^2+\lambda \mathbb{E}_k \|x^{k+1}-x^*\|^2+\frac{1-\lambda}{p} \mathbb{E}_k\|w^{k+1}-x^*\|^2 \\
& \leq \lambda \|x^k-x^*\|^2+\frac{1-\lambda}{p}\|w^k-x^*\|^2-\lambda\|y^k-x^k\|^2 \\
& \ \ \ \ -(1-\lambda-\gamma^2L^2-\frac{\gamma}{2\beta})\|y^k-w^k\|^2.
\endaligned
\end{equation}
By $\|a+b\|^2 \leq 2\|a\|^2 + 2\|b\|^2$, the step 6 and \eqref{lip}, we have
\begin{equation}
\label{12}
\aligned
2\gamma\mu\|y^k-x^*\|^2
& \geq \gamma \mu \mathbb{E}_k[\|x^{k+1}-x^*\|^2]-2\gamma\mu \mathbb{E}_k[\|\gamma(B_{\xi_k}w^k - B_{\xi_k}y^k)\|^2] \\
& \geq \gamma \mu \mathbb{E}_k[\|x^{k+1}-x^*\|^2]-2\gamma^3L^2\mu\|y^k-w^k\|^2.
\endaligned
\end{equation}
Combining \eqref{11}, \eqref{12} and $\lambda=1-p$,  we get
\begin{equation}
\label{14}
\aligned
& \ \ \ \ (1-p+\gamma \mu)\mathbb{E}_k [\|x^{k+1}-x^*\|^2]+\mathbb{E}_k[\|w^{k+1}-x^*\|^2] \\
& \leq (1-p)\|x^k-x^*\|^2+\|w^k-x^*\|^2-(1-p)\|y^k-x^k\|^2 \\
& \ \ \ \ -(p-\gamma^2L^2-\frac{\gamma}{2\beta}-2\gamma^3L^2\mu )\|y^k-w^k\|^2 \\
& \leq (1-p)\|x^k-x^*\|^2+\|w^k-x^*\|^2-(1-p)\|y^k-x^k\|^2 \\
& \ \ \ \ -\frac{p(1-\sqrt{p})}{4}\|y^k-w^k\|^2,
\endaligned
\end{equation}
where the last inequality is obtained by $\gamma=\min\{\frac{\sqrt{p}}{2L},\beta p\}$ and $\mu \leq L$. Similar to \eqref{12}, we have
\begin{equation}
\label{13}
\aligned
&\frac{c}{2}\mathbb{E}_k[\|x^{k+1}-x^*\|^2]\\
 \geq& \frac{c}{4} \mathbb{E}_k[\|w^{k+1}-x^*\|^2] - \frac{c}{2}\mathbb{E}_k[\mathbb{E}_{k+\frac{1}{2}}\|x^{k+1}-w^{k+1}\|^2] \\
 =& \frac{c}{4} \mathbb{E}_k[\|w^{k+1}-x^*\|^2] - \frac{c(1-p)}{2}\mathbb{E}_k[\|x^{k+1}-w^k\|^2] \\
 =& \frac{c}{4} \mathbb{E}_k[\|w^{k+1}-x^*\|^2] - \frac{c(1-p)}{2}\mathbb{E}_k[\|y^k-w^k+\gamma(B_{\xi_k}w^k-B_{\xi_k}y^k)\|^2] \\
 \geq& \frac{c}{4} \mathbb{E}_k[\|w^{k+1}-x^*\|^2] -c(1-p)(1+\gamma^2L^2)\|y^k-w^k\|^2 \\
 \geq& \frac{c}{4} \mathbb{E}_k[\|w^{k+1}-x^*\|^2] - \frac{c(1-p)(4+p)}{4}\|y^k-w^k\|^2.
\endaligned
\end{equation}
Putting \eqref{13} into \eqref{14} and recalling that $c \leq \gamma \mu$,  we have
\begin{equation}
\label{15}
\aligned
& \ \ \ \ (1-p+\frac{c}{2})\mathbb{E}_k [\|x^{k+1}-x^*\|^2]+(1+\frac{c}{4})\mathbb{E}_k[\|w^{k+1}-x^*\|^2] \\
& \leq (1-p)\|x^k-x^*\|^2+\|w^k-x^*\|^2-(1-p)\|y^k-x^k\|^2 \\
& \ \ \ \ -\left[\frac{p(1-\sqrt{p})}{4}-\frac{c(1-p)(4+p)}{4}\right]\|y^k-w^k\|^2\\
& \leq(1-p)\|x^k-x^*\|^2+\|w^k-x^*\|^2,
\endaligned
\end{equation}
where the last inequality comes from $c \leq \frac{p}{(1+\sqrt p)(4+p)}.$
Then, using $1-p+\frac{c}{2} \geq (1-p)(1+\frac{c}{4})$ and taking the full expectation on \eqref{15}, we have
\begin{equation*}
(1+\frac{c}{4})\mathbb{E}[(1-p)\|x^{k+1}-x^*\|^2+\|w^{k+1}-x^*\|^2]  \leq \mathbb{E}[(1-p)\|x^k-x^*\|^2+\|w^k-x^*\|^2].
\end{equation*}
Iterating this inequality, we obain
\begin{equation*}
(1-p)\mathbb{E}\|x^{k}-x^*\|^2  \leq(\frac{1}{1+c/4})^k (2-p)\|x^0-x^*\|^2,
\end{equation*}
showing \eqref{xx}.
\end{proof}

\section{Numerical Simulations}\label{numerics}

\noindent In this section, we compare the Algorithm \ref{ALG} (VRFBHF) with the FBHF splitting algorithm \eqref{FBHF}. Consider the nonlinear constrained optimization problem of the form
\begin{equation}
\label{fh}
\min_{x\in C}f(x)+h(x),
\end{equation}
where $C= \{ x \in \mathcal{H} \,|\, ( \forall i \in \{1,...,q\}) \ g_i(x) \leq 0\} $, $f: \mathcal{H} \rightarrow ( -\infty ,+\infty]$ is a proper, convex  and  lower semi-continuous function, for every $i \in \{1,...,q\}$, $g_i : \dom(g_i) \subset \mathcal{H} \rightarrow \mathbb{R}$ and $h: \mathcal{H} \rightarrow \mathbb{R}$ are $C^1$ convex functions in $\nt \dom g_i$ and $\mathcal{H}$, respectively, and $\nabla h$ is $\beta$-Lipschitz.  The solution to the optimization problem \eqref{fh} can be found via the saddle points of the Lagrangian
\begin{equation*}
L(x,u)=f(x)+h(x)+u^{\top}g(x)-\iota_{\mathbb{R}_+^q}(u),
\end{equation*}
where $\iota_{\mathbb{R}_+^q}$ is the indicator function of $\mathbb{R}_+^q$, Under some standard qualifications, the solution to the optimization problem \eqref{fh} can be found by solving the monotone inclusion \cite{{FBHF},RT}: find $x \in Y$ such that $\exists u \in \mathbb{R}_{+}^q$,
\begin{equation}
\label{ABCxu}
(0,0) \in (A+B+C)(x,u),
\end{equation}
where $Y \subset \mathcal{H}$ is a nonempty closed convex set modeling the prior information of the solution, $A:(x,u)\mapsto\partial f(x)\times{N_{\mathbb{R}_+^q}u}$ is a maximally monotone, $C:(x,u)\mapsto(\nabla h(x),0)$ is $\beta$-cocoercive, and $ B:(x,u)\mapsto(\sum_{i=1}^q u_i \nabla g_i(x),-g_1(x),...,-g_q(x))$ is nonlinear, monotone and continuous.
\vskip 2mm
\begin{exm} \label{ex1}
\rm
\noindent Let $\mathcal{H}=\mathbb{R}^d,$ $f=\iota_{[0,1]^d}$, {$g_i(x)=s_i^{\top}x$ ($\forall i \in  \{1,...,q\}$) with $s_1,\ldots,s_q\in \mathbb{R}^d$,} and  $h=\frac{1}{2}\|Gx-b\|^2$ with $G$ being an $t \times d$ real matrix, $d=2t$, $b\in \mathbb{R}^t$. Then the  operators in \eqref{ABCxu} become
\begin{equation}
\label{A1}
\aligned
&A:(x,u)\mapsto\partial{\iota_{[0,1]^d}(x)}\times{N_{\mathbb{R}_+^q}u},\\
& B:(x,u)\mapsto(D^{\top}u,-Dx),\\
& C:(x,u)\mapsto(G^{\top}(Gx-b),0),\\
 \endaligned
\end{equation}
where  $x\in \mathbb{R}^d$, $u\in \mathbb{R}_+^q$, {$D=[s_1,\ldots,s_q]^{\top}$.} It is easy to see that the operator $A$ is a maximally monotone operator, $C$ is a $\beta$-cocoercive operator with $\beta =\|G\|^{-2}$, $B$ is a $L_B$-Lipschitz operator with $L_B=\|D\|$.
In the light of the structure of the operator $B$, rewrite $B$ as $B= \sum_{i=1}^{q+d} B_i.$ For uniform sampling, the stochastic oracle $B_{\xi}(x,u)=(q+d)B_i(x,u)$, $P_{\xi}(i)=\Pro{ \{\xi=i\}}=\frac{1}{q+d}$, $i \in \{1,...,q+d\}$.

\begin{figure}[h]
	
	\begin{minipage}{0.32\linewidth}
		\vspace{3pt}
		\centerline{\includegraphics[width=\textwidth]{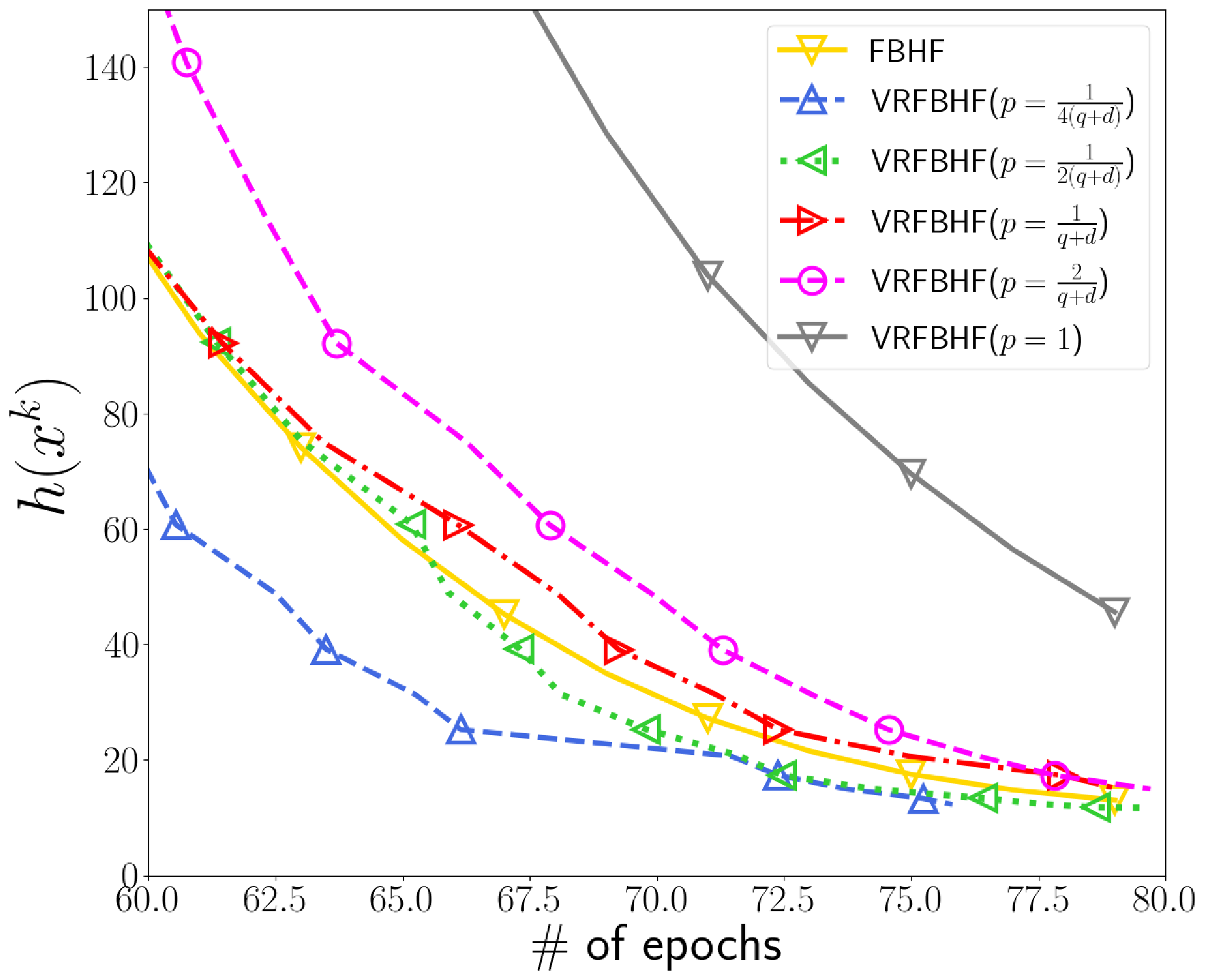}}
		\centerline{$q=1000,d=500$}
	\end{minipage}
	\begin{minipage}{0.32\linewidth}
		\vspace{3pt}
		\centerline{\includegraphics[width=\textwidth]{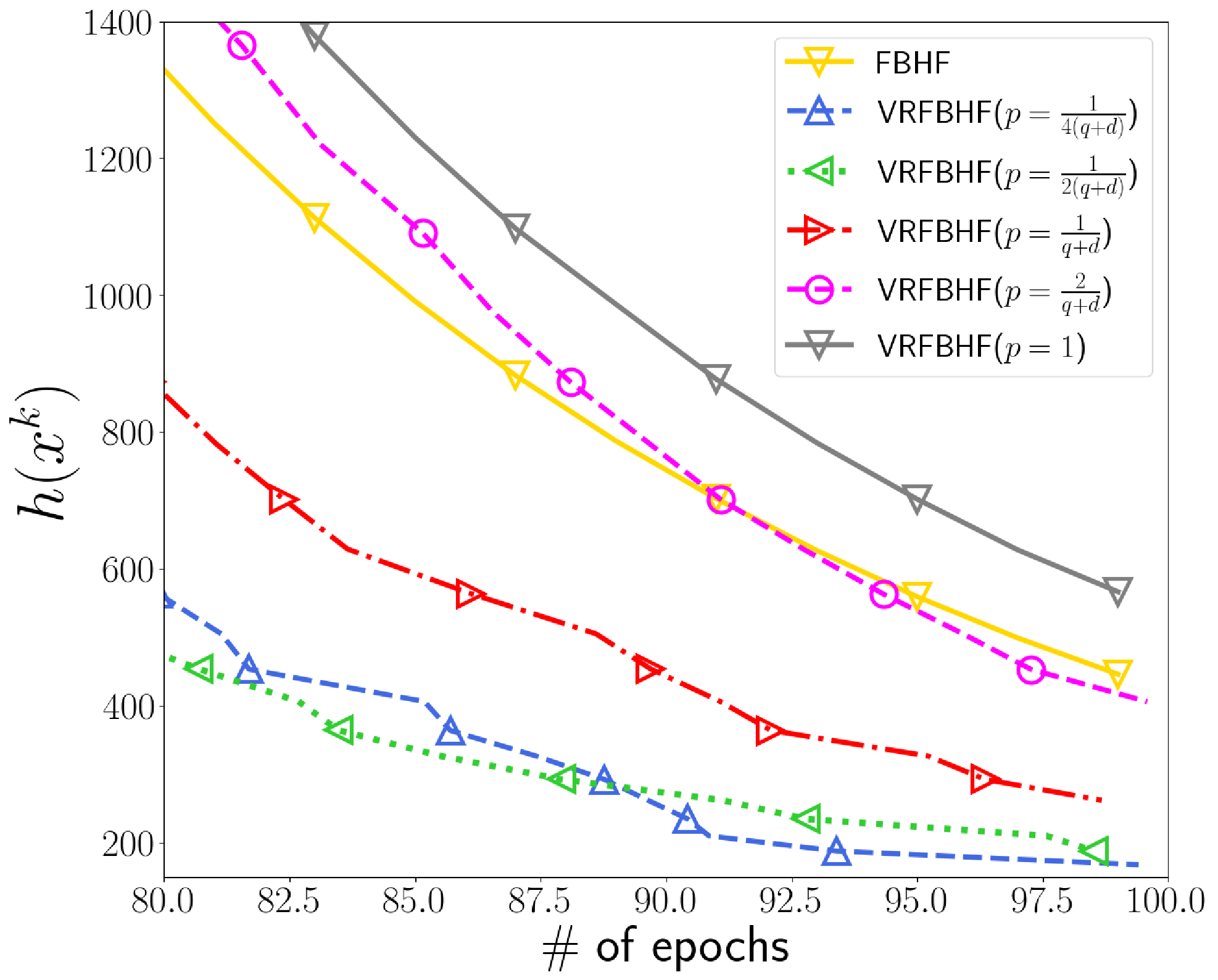}}	
		\centerline{$q=1000,d=1000$}
	\end{minipage}
	\begin{minipage}{0.32\linewidth}
		\vspace{3pt}
		\centerline{\includegraphics[width=\textwidth]{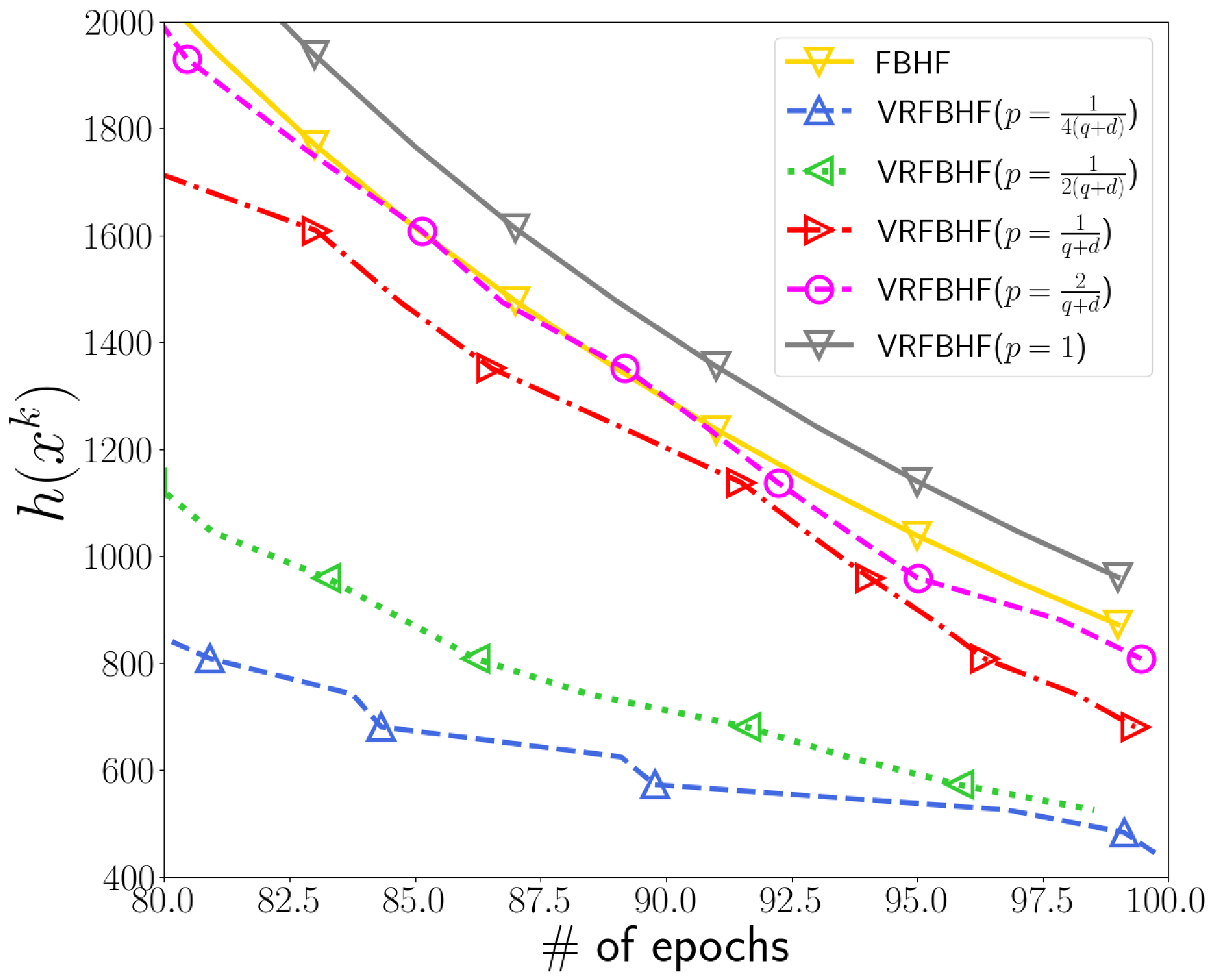}}	
		\centerline{$q=500,d=1000$}
	\end{minipage}

	\caption{Decay of $h(x^k)$ with the number of epochs }
	\label{fig1}
\end{figure}

\begin{figure}[h]
	
	\begin{minipage}{0.32\linewidth}
		\vspace{3pt}
		\centerline{\includegraphics[width=\textwidth]{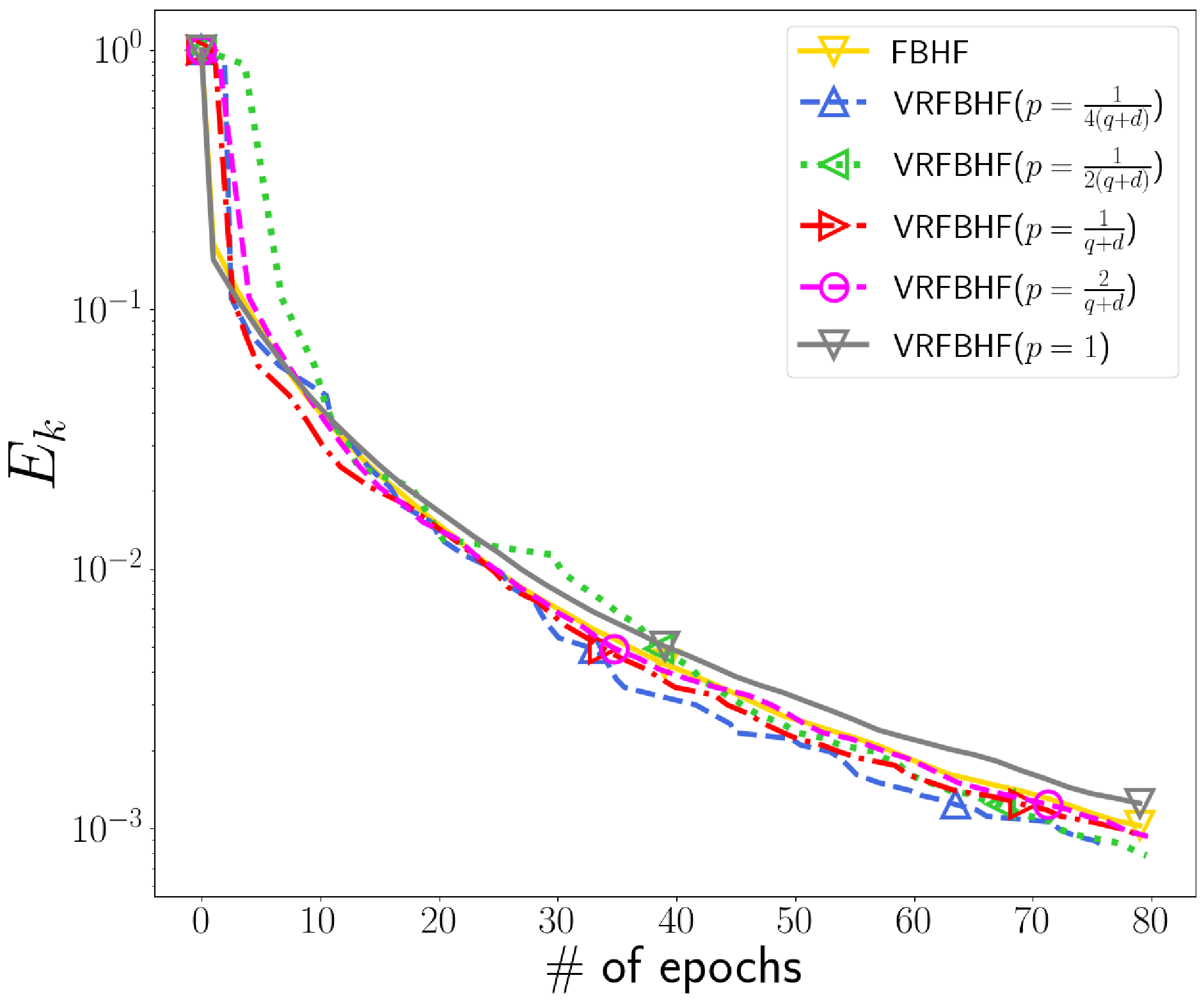}}
		\centerline{$q=1000,d=500$}
	\end{minipage}
	\begin{minipage}{0.32\linewidth}
		\vspace{3pt}
		\centerline{\includegraphics[width=\textwidth]{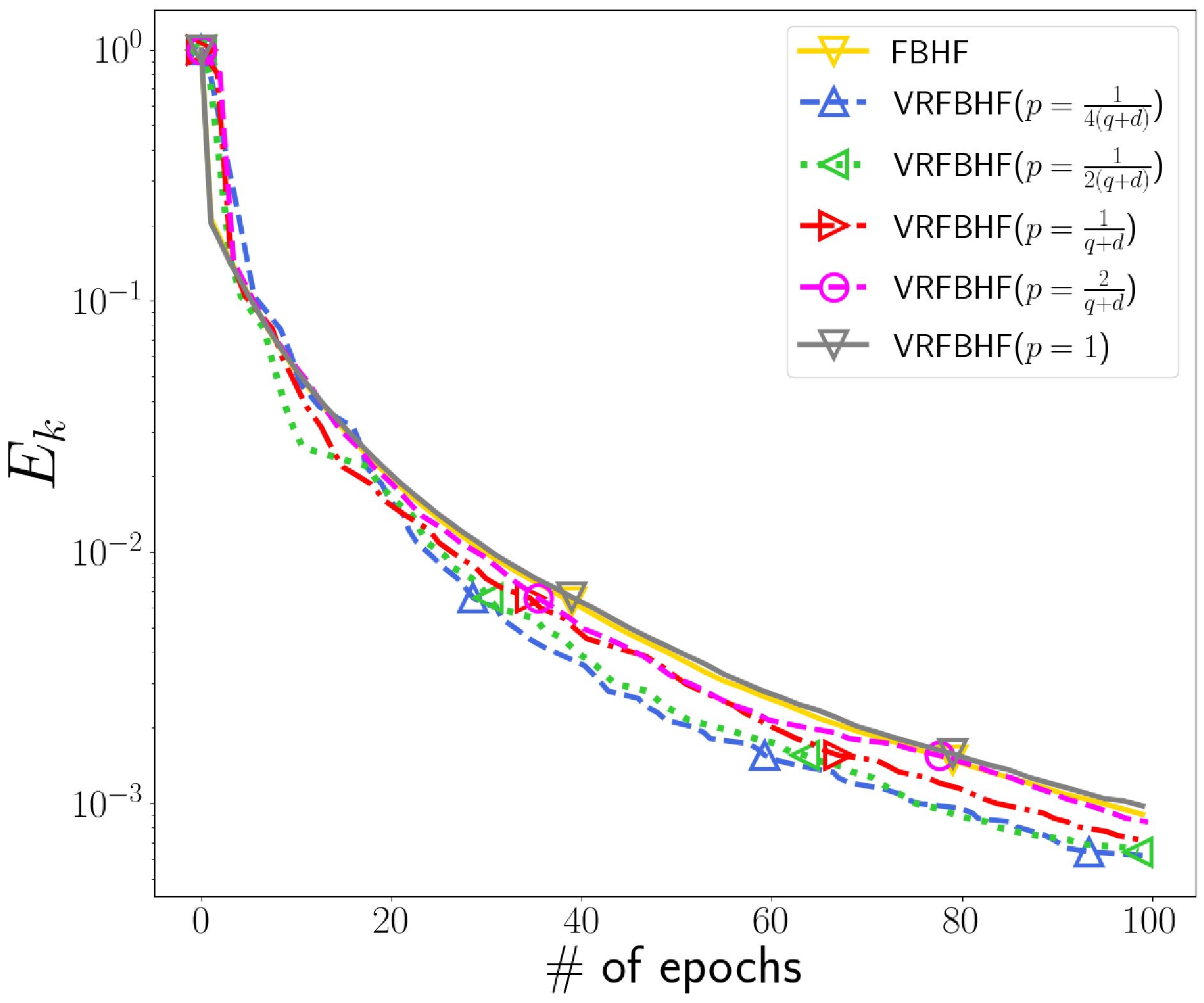}}	
		\centerline{$q=1000,d=1000$}
	\end{minipage}
	\begin{minipage}{0.32\linewidth}
		\vspace{3pt}
		\centerline{\includegraphics[width=\textwidth]{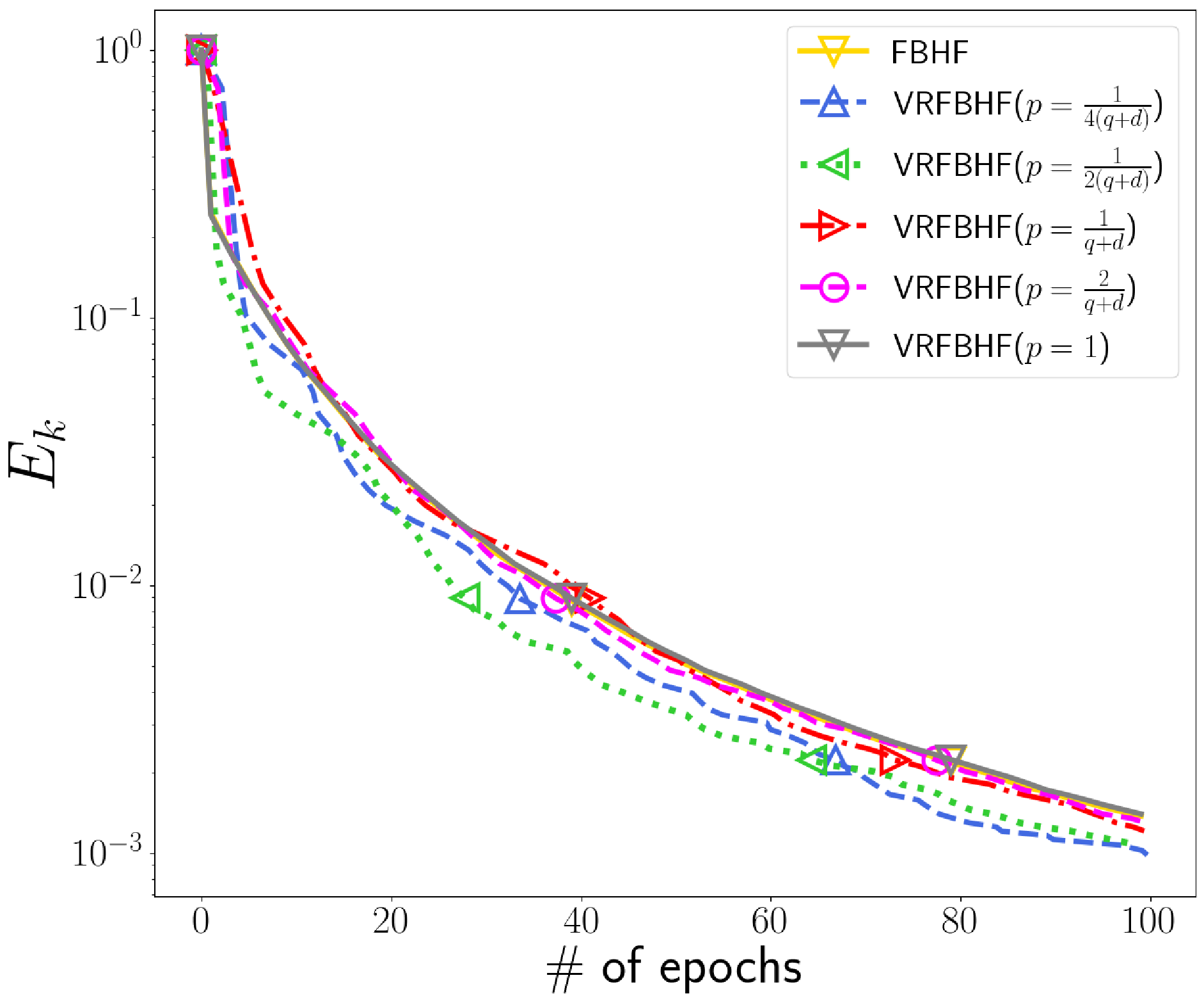}}	
		\centerline{$q=500,d=1000$}
	\end{minipage}

	\caption{Decay of $E_k$ with the number of epochs }
	\label{fig2}
\end{figure}

In the numerical test,  $G,D,b$ and initial value $(x_{0},u_{0})$  are all randomly generated. In VRFBHF, set  $(w_{0},v_{0})=(x_{0},u_{0}),$ take $\lambda=0.1,$ and $\gamma=\frac{\beta(1-\lambda)}{1+\sqrt{1+16 \beta^2L^2(1-\lambda)}}$. In FBHF, take $\gamma =\frac{\beta}{1+\sqrt{1+16\beta^2L_B^2}}.$ We test three problem sizes, it is observed from Figure \ref{fig1} that {$h(x^k)$} of VRFBHF with $p=\frac{1}{4(q+d)}$ decreases most rapidly.  Figure \ref{fig2} illustrates the decay of $E_k$ with the number of epochs for FBHF and VRFBHF, where $E_k=\frac{\|(x^{k+1}-x^k,u^{k+1}-u^k)\|}{\|(x^k,u^k)\|}.$  In particular, we consider the case $p=1$, which represents a partially deterministic variant of FBHB with random corrections. As shown in the Figure \ref{fig3}, Algorithm \ref{ALG} achieves a faster convergence in terms of CPU time for highlighting the computational advantage of the random correction. It can be seen that VRFBHF slightly outperforms FBHF when $d \geq q$.

\begin{figure}[h]
	
	\begin{minipage}{0.32\linewidth}
		\vspace{3pt}
		\centerline{\includegraphics[width=\textwidth]{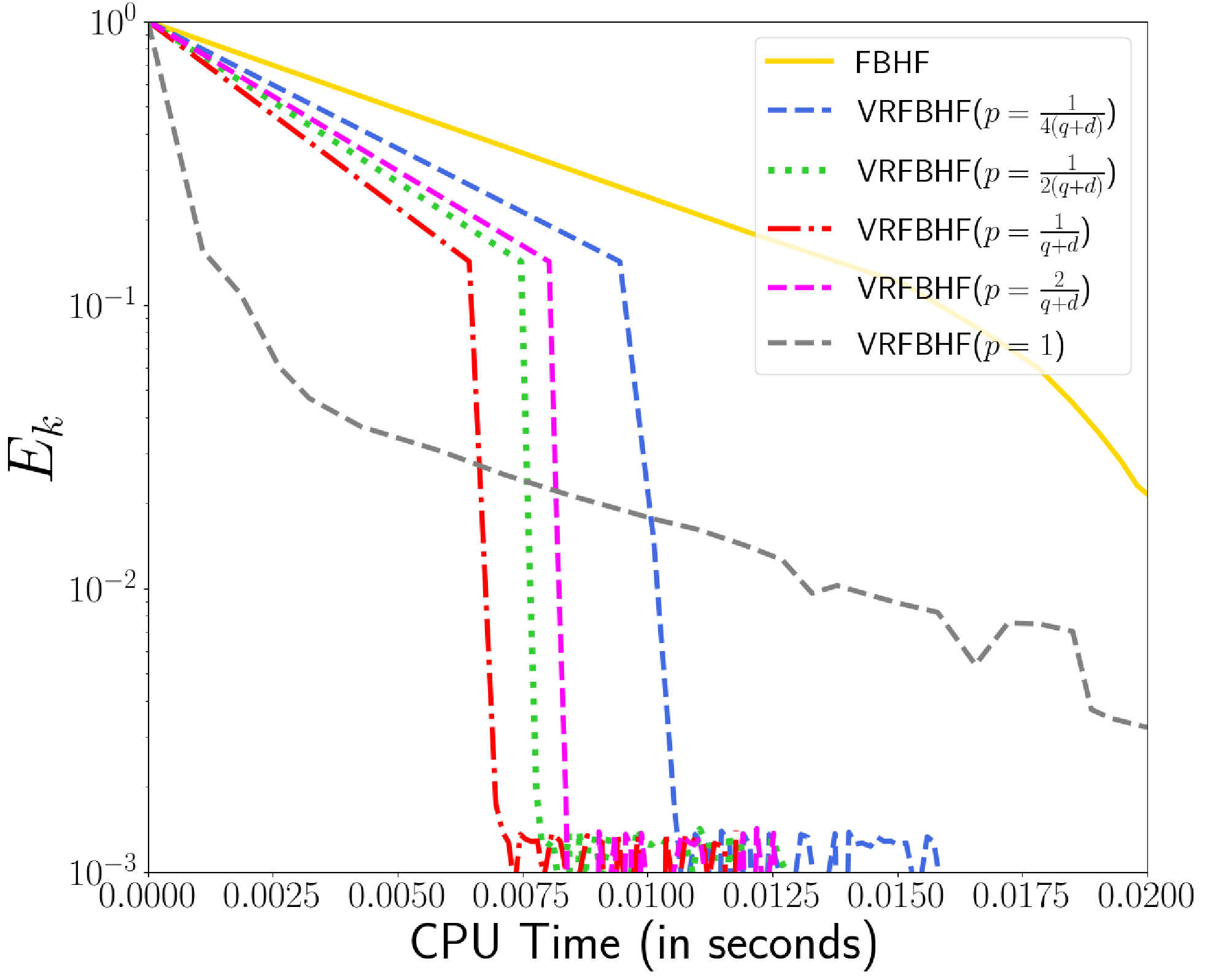}}
		\centerline{$q=1000,d=500$}
	\end{minipage}
	\begin{minipage}{0.32\linewidth}
		\vspace{3pt}
		\centerline{\includegraphics[width=\textwidth]{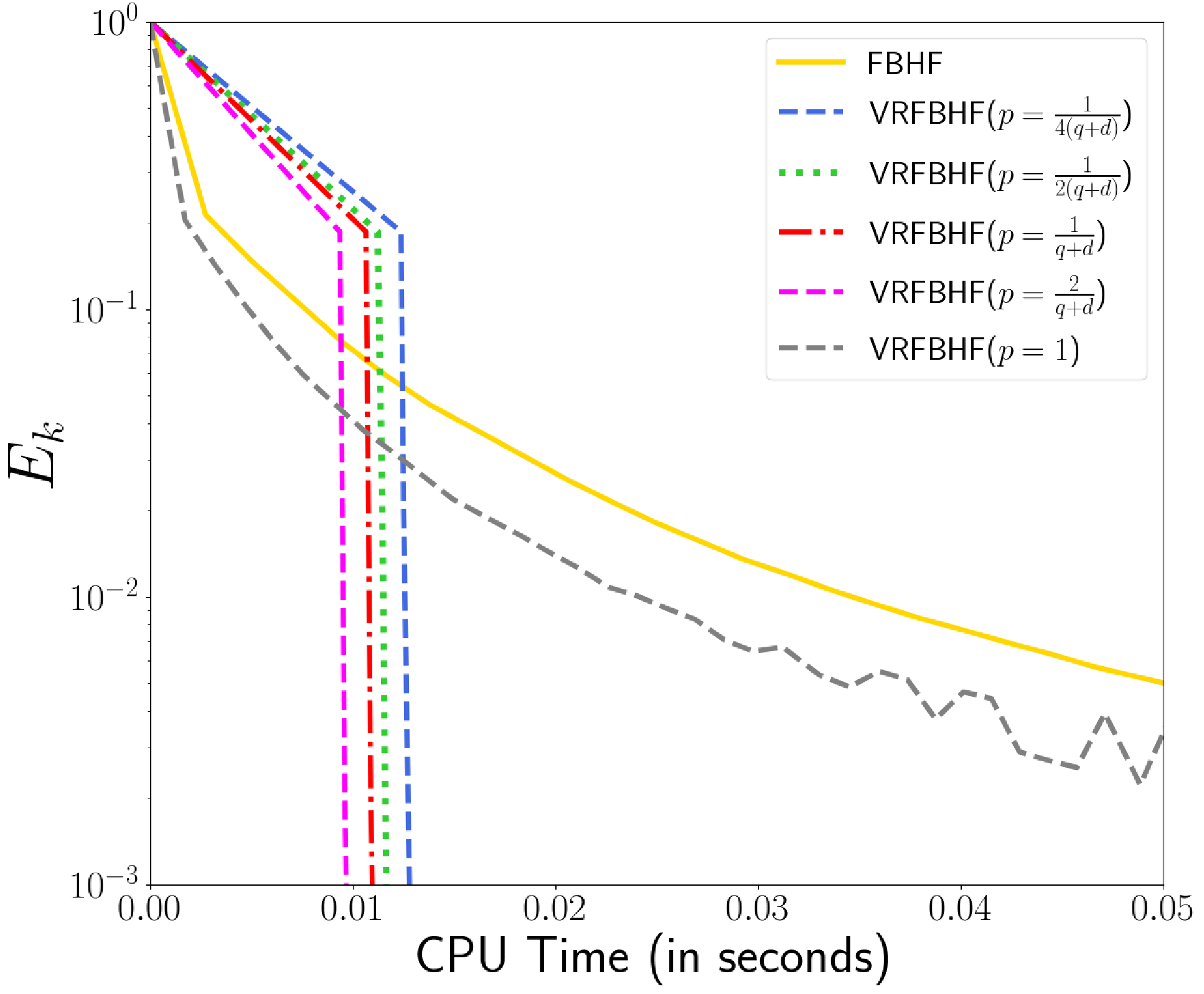}}	
		\centerline{$q=1000,d=1000$}
	\end{minipage}
	\begin{minipage}{0.32\linewidth}
		\vspace{3pt}
		\centerline{\includegraphics[width=\textwidth]{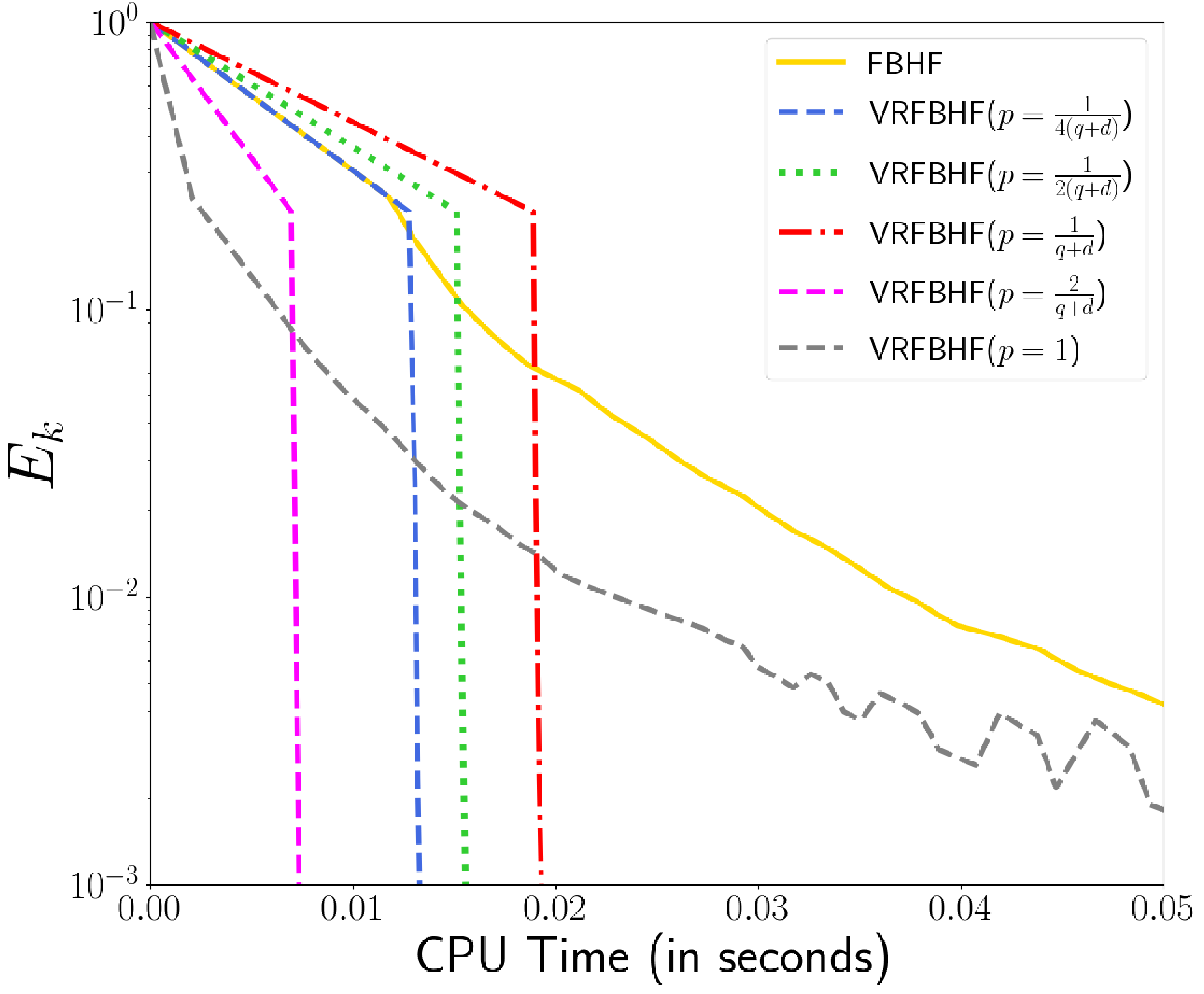}}	
		\centerline{$q=500,d=1000$}
	\end{minipage}

	\caption{Decay of $E_k$ with the CPU time}
	\label{fig3}
\end{figure}

\end{exm}
{
\subsection*{Acknowledgements}
We were deeply grateful to the anonymous reviewer for suggesting the generalization of the problem setting from finite-dimensional spaces to separable real Hilbert spaces and for their insightful suggestions that significantly enhanced the numerical experiments in the revised manuscript.
}
\subsection*{Declarations}
\textbf{Funding} The third author was supported by Scientific Research Project of Aeronautical Science Foundation of China (No.20200008067001) and National Natural Science Foundation
of China (No. 12271273).\\
\\
\textbf{Availability of data and materials}  The datasets generated during and/or analyzed during the current study are available
from the corresponding author on reasonable request.\\
\\
\textbf{Conflict of Interest} The authors declare that they have no conflict of interest.


\begin{thebibliography}{99}
\bibitem{Alacaoglu-Malitsky}
Alacaoglu, A., Malitsky, Y.: Stochastic variance reduction for variational inequality methods. \textit{Mach Learn.} \textbf{178}, 1–-39 (2022)

\bibitem{BC2011}
Bauschke, H.H., Combettes, P.L.: \textit{Convex Analysis and Monotone Operator Theory in Hilbert Spaces}, 2nd ed. Springer, New York, (2017)

\bibitem{Barnet}
Barnet, S., Rudzusika, J., \"{O}ktem, O., and Adler, J.: Accelerated forward-backward optimization using deep learning. \textit{SIAM J Optimiz.} \textbf{34(2)}, 1236--1263 (2024)


\bibitem{FBHF}
Brice\~{n}o-Arias, L.M., Davis, D.: Forward-backward-half forward algorithm for solving monotone inclusions. \textit{SIAM J Optimiz.} \textbf{28(4)}, 2839--2871 (2018)


\bibitem{Comettes}
Combettes, P.L., Pesquet, J.-C.: Stochastic quasi-Fejér block-coordinate fixed point iterations with random sweeping. \textit{SIAM J Optimiz.} \textbf{25(2)}, 1221--1248 (2015)


\bibitem{Comb}
Combettes, P.L., Pesquet, J.C.: Primal-dual splitting algorithm for solving inclusions with mixtures of composite, Lipschitzian, and parallel-sum type monotone operators. \textit{Set-valued Var Anal.} \textbf{20}, 307–-330 (2012)


\bibitem{Davis}
Davis, D., Yin, W.: A three-operator splitting scheme and its optimization applications. \textit{Set-valued Var Anal.} \textbf{25}, 829--858 (2017)


\bibitem{SVRG}
Johnson, R., Zhang, T.: Accelerating stochastic gradient descent using predictive variance reduction. \textit{Adv Neural Inf Process Syst.} 315--323 (2013)

\bibitem{KHR}
Kovalev, D., Horvath, S., and Richt\'{a}rik, P.: Don't jump through hoops and remove those loops:
SVRG and katyusha are better without the outer loop. \textit{Mach Learn.} \textbf{117}, 1--17 (2020)

\bibitem{LJC}
Liu, J.C., Xu, L.L., Shen, S.H., and Ling, Q.: An accelerated variance reducing stochastic method with Douglas-Rachford splitting. \textit{Mach Learn.} \textbf{108}, 859-–878 (2019)

\bibitem{Latafat}
Latafat, P., Patrinos, P.: Asymmetric forward-backward-adjoint splitting for solving monotone inclusions involving three operators. \textit{Comput Optim Appl.} \textbf{68}, 57--93 (2017)

\bibitem{Malitsky}
Malitsky, Y., Tam, M.K.: A forward-backward splitting method for monotone inclusions without cocoercivity. \textit{SIAM J Optim.} \textbf{30(2)}, 1451--1472 (2020)

\bibitem{Nesterov1}
Nesterov, Y.E.:  A method for solving the convex programming problem with convergence rate
$O(1/k^2)$. \textit{Dokl. Akad. Nauk SSSR} (in Russian) \textbf{269}, 543--547 (1983)

\bibitem{Nesterov2}
Nesterov, Y.E.: Lectures on convex optimization, Lectures on convex optimization, 2nd edn. Springer, Cham

\bibitem{Quoc}
Quoc, T.D., Anh, P.N., Muu, L.D.: Dual extragradient algorithms extended to equilibrium problems.  \textit{J Global Optim.} \textbf{52}, 139–-159 (2012)

\bibitem{Rie}
Rieger, J., Tam, M.K.: Backward-forward-reflected-backward splitting for three operator monotone inclusions. \textit{Appl Math Comput.} \textbf{381}, 125248 (2020)

\bibitem{Ryu}
Ryu, E.K.: Uniqueness of DRS as the 2 operator resolvent-splitting and impossibility of 3 operator resolvent-splitting. \textit{Math Program.} \textbf{182(1)}, 233--273 (2020)

\bibitem{Ryuu}
Ryu, E.K., V\~{u}, B.C.: Finding the forward-Douglas–Rachford-forward method. \textit{J Optimiz Theory App.} \textbf{184(3)}, 858--876 (2020)

\bibitem{RT}
Rockafellar, R.T.: Monotone operators associated with saddle-functions and minimax problems, in: Nonlinear Func. Anal., I, F.E. Browder ed., Proc. Pure Mat \textbf{18}, 241–-250 (1970)


\bibitem{Shehu}
Shehu, Y., Liu, L.L., Dong, Q.L., and Yao, J-C.: A relaxed forward-backward-forward algorithm with alternated inertial step: weak and linear convergence. \textit{Netw Spat Econ.}  \textbf{22(4)}, 959–-990 (2022)

\bibitem{Showalter}
 Showalter, R.: \textit{Montone Operators in Banach Space and Nonlinear Partial Differential Equations},
American Mathematical Society, Providence (1997).

\bibitem{Tseng}
Tseng, P.: A modified forward-backward splitting method for maximal monotone mapping.
\textit{SIAM J Control Optim.} \textbf{38(2)}, 431--446 (2000)

\bibitem{Thong}
Thong, D.V., Vuong, P.T., Anh, P.K., and Muu, L.D.: A new projection-type method with nondecreasing adaptive step-sizes for pseudo-monotone variational inequalities. \textit{Netw Spat Econ.}  \textbf{22(4)}, 803–-829 (2022)

\bibitem{Yu}
Yu, H., Zong, C.X., and Tang, Y.C.: An outer reflected forward-backward splitting algorithm for solving monotone inclusions. https://arxiv.org/abs/2009.12493 (2020)

\bibitem{Zhang}
Zhang, X., Haskell, W.B., and Ye, Z.S.: A unifying framework for variance-reduced algorithms
for findings zeroes of monotone operators. \textit{J Mach Learn Res.}
\textbf{23(60)}, 1–-44 (2022)

\bibitem{Zong}
Zong, C.X., Tang, Y.C., and Cho, Y.J.: Convergence analysis of an inexact three-operator splitting algorithm. \textit{Symmetry.} \textbf{10(11)}, 563 (2018)

\bibitem{Tang}
Zong, C.X., Tang, Y.C., and Zhang, G.F.: An accelerated forward-backward-half forward splitting algorithm for monotone inclusion with applications to image restoration.  \textit{Optimization.} \textbf{73(2)}, 401--428 (2024)
\end{thebibliography}
\end{document}